\tikzstyle{every node}=[color=black, circle, draw, fill=black, inner sep=0pt, minimum width=2pt]
\tikzstyle{left}=[color=green, circle, draw, fill=green, inner sep=0pt, minimum width=4pt]
\tikzstyle{right}=[color=red, circle, draw, fill=red, inner sep=0pt, minimum width=4pt]
\tikzstyle{s}=[color=black, circle, draw, fill=black, inner sep=0pt, minimum width=1.5pt]
\tikzstyle{l}=[color=black, circle, draw, fill=black, inner sep=0pt, minimum width=3pt]
\tikzstyle{plain}=[color=black, fill=none, draw=none, inner sep=0pt, minimum width=2pt]
\newcommand{\eps}{\varepsilon}
\newcommand{\Pro}[1]{\mathbb{P} \left[\,#1\,\right]}
\newcommand{\Ex}[1]{\mathbb{E} \left[\, #1\,\right]}
\renewcommand{\P}{\mathbb{P}}
\renewcommand{\Pr}{\mathbb{P}}
\newcommand{\V}{V_N}
\newcommand{\D}{\mathcal{D}_R}
\newcommand{\G}{\mathcal{G}}
\newcommand{\Gnan}{\mathcal{G}(N;\alpha,\nu)}
\newcommand{\Pnan}{\mathcal{P}(N;\alpha,\nu)}
\DeclareMathOperator{\core}{Core}
\DeclareMathOperator{\xcore}{XCore}
\DeclareMathOperator{\per}{Per}
\newtheorem{theorem}{Theorem}  
\newtheorem{fact}[theorem]{Fact}
\newtheorem{lemma}[theorem]{Lemma}
\newtheorem{claim}[theorem]{Claim}
\newtheorem{corollary}[theorem]{Corollary}
\newtheorem{remark}[theorem]{Remark}
\newtheorem{definition}[theorem]{Definition}
\numberwithin{theorem}{section}
\numberwithin{equation}{section}
\title{Typical distances in a geometric model for complex networks
\footnote{ \small 2010 \emph{Mathematics Subject Classification}: Primary: 05C80 Secondary: 05C12, 05C82. 
\small \emph{Keywords}: complex networks, random graphs, typical distances.}
}
 \author{Mohammed Amin Abdullah\footnote{Mathematical and Algorithmic Sciences Lab, Huawei Technologies Ltd. 
e-mail: \texttt{mohammed.abdullah@huawei.com}} 
\qquad Michel Bode\footnote{School of Mathematics, University of Birmingham, United Kingdom e-mail: \texttt{michel.bode@gmx.de}} 
\qquad Nikolaos Fountoulakis\footnote{School of Mathematics, University of Birmingham, United Kingdom  e-mail: \texttt{n.fountoulakis@bham.ac.uk}} \thanks{This research has been supported by a Marie Curie Career Integration Grant
 PCIG09-GA2011-293619.}
}
\date{\today}
\begin{document}

\maketitle 
\begin{abstract}
We study typical distances in a geometric random graph on the hyperbolic
plane. Introduced 
by Krioukov et al.~\cite{ar:Krioukov} as a model for complex networks, $N$ vertices are drawn randomly within a bounded subset of the hyperbolic plane 
and any two of them are joined if they are within a threshold hyperbolic distance. 
With appropriately chosen parameters, the random graph is sparse and exhibits power law degree distribution 
as well as local clustering.  In this paper we show a further property:  the distance between two uniformly chosen vertices that belong to the same component is doubly logarithmic in $N$, i.e., the graph is an  ~\emph{ultra-small world}. More precisely, we show that the distance rescaled by $\log \log N$ converges in probability to a certain constant that depends 
on the exponent of the power law. The same constant emerges in an analogous setting with the well-known \emph{Chung-Lu} model for which the degree distribution has a power law tail.

\end{abstract}
\section{Introduction} 

The~\emph{small-world problem} was first stated by Stanley Milgram in his 1967 paper~\cite{ar:Milgram1967} through which he  gave 
strong evidence of the so-called \emph{small-world effect}. The simplest formulation of the small-world problem~\cite{ar:Milgram1967} 
is: ``Starting with any two people in the world, what is the probability that they will know each other?".
A more sophisticated formulation of the problem asks whether any two people, if they do not directly know of each other,  have common 
acquaintances. Milgram's experiment indicated that this is indeed the case within a relatively small random sample of the population of the 
United States. In particular, it turned out the at least half of the sample was within \emph{six degrees of separation} from the 
``target" individual. In graph theoretic terms, in the graph of acquaintances the nodes that represent these individuals are within 
distance 6 from the node that was representing the target individual. 

The small-world phenomenon is ubiquitous in natural and technological networks such as neural networks, 
the Internet, the World-Wide-Web or the power grid -- see the book of Chung and Lu~\cite{ChungLuBook+} as well as the book 
of Dorogovtsev~\cite{Dor} for experimental evidence regarding such networks. For example, it was announced relatively recently 
that between any two active users of Facebook there are 3.74 degrees of separation on average~\cite{url:Facebook}. 

There have been numerous attempts to explain this phenomenon through the theory of complex networks. Among the initial attempts 
was the ``small-world" model of Watts and Strogatz which is defined through random re-wiring of the edges of a cyclic lattice. This 
model exhibits small average distance, but lacks a basic feature of such large self-organizing networks which is the \emph{scale freeness}. 
Experimental evidence~\cite{AlbBar} suggests that these networks have a distribution of degrees whose tail decays like a power law with 
exponent usually between 2 and 3. 

Of course the term ``small-world" itself is somewhat vague. Loosely speaking, the term refers to average distances that are slowly growing 
functions of the number of vertices of the network. A possible candidate is the logarithmic function. 
Thus, the classical Erd\H{o}s-R\'enyi random graph may be thought of as a small-world 
graph as it has logarithmic diameter -- see~\cite{Bol01}. However, it lacks the scale freeness as well and, furthermore, it represents a 
very homogeneous network. This is a very unrealistic feature as most large scale networks contain vertices that have very different
properties from each other. Sub-logarithmic bounds on the diameter were established for the preferential attachment model~\cite{BarAlb}
by Bollob\'as and Riordan~\cite{Diam}. As it was shown by Bollob\'as et al.~\cite{DegSeq}, this is scale-free with exponent equal to 3. 

Recent research that focused on models for complex networks that are scale free with power law exponent between 2 and 3 
identified cases of such networks that are \emph{ultrasmall}. This term is associated with models in which the distance between 
two randomly chosen connected vertices grows \emph{doubly logarithmically} in the number of vertices of the random graph. 
With $N$ denoting the number of vertices, the function $\log \log N$ is a very slowly growing function. Presumably this is closer 
to empirical evidence which comes from networks that have millions of vertices but whose average distance between two randomly 
chosen vertices is very small. 

An analytical relation between the two was first established by Cohen and Havlin~\cite{ar:CohenHavlin} and by Dorogovtsev, Mendes and 
Samukhin~\cite{ar:DorMendSam}. It was shown rigorously for a variety of random graph models which exhibit power law degree distribution 
such as the Chung-Lu model~\cite{ChungLu1+}, the Norros-Reittu model~\cite{ar:NorReit}, the configuration model~\cite{ar:vdHHogm}
as well as variations of the preferential attachment model~\cite{ar:DomvdHofHoog}~\cite{ar:DerMoenMoert}.

\subsection{A geometric framework for complex networks}
Recently, Krioukov et al.~\cite{ar:Krioukov} introduced a geometric framework in order to describe the inherent inhomogeneity
of a complex network. Their basic assumption is that the intrinsic hierarchies 
that are present in a complex network induce a tree-like structure. This suggests that the geometry of a complex network 
is hyperbolic. 

There are several representations of the hyperbolic plane. In this paper, we shall use the 
Poincar\'e unit disk representation, which is simply the open disk of radius one, that is, 
$\{(u,v) \in \mathbb{R}^2 \ : \ 1-u^2-v^2 > 0 \}$, 
which is equipped with the hyperbolic metric: ${4}~{du^2 + dv^2\over (1-u^2-v^2)^2}$. 
This is the standard formulation of the hyperbolic plane. 


In particular, a suitable integration of the metric shows that 
the length of a circle of radius $r$ (centered at the origin)  is
$2\pi~\sinh (r)$, whereas the area of this circle (centered at the origin) is
$2\pi (\cosh ( r) - 1)$. Hence, a fundamental difference with the Euclidean plane is that 
volumes grow exponentially. 

We are now ready to give the definition of the basic model introduced in~\cite{ar:Krioukov}.
Consider the Poincar\'e disk representation of the hyperbolic plane $H^{2}_{-1}$. 
Let $N$ be the number of vertices of the random graph, and we assume that $N\to \infty$. 
Consider also some fixed constant $\nu >0$ and let $R>0$ satisfy $N= \nu e^{R /2}$.
It turns out that the parameter $\nu$ determines the average degree of the random graph. 
Consider the disk $\D$ of hyperbolic radius $R$ centered at the origin of the Poincar\'e disk (that is, the set of points of the Poincar\'e disk at hyperbolic distance at most $R$ from its origin).

Let $\V := \{ v_1,\ldots, v_N \}$ be a binomial point process on $\D$. 
This is a random set of points of size $N$ that are the outcomes of the $i.i.d.$ random variables $v_1,\ldots , v_N$ taking values on $\D$.  
(We will be referring to the random variables $v_i$ as vertices, meaning their outcomes on $\D$.)
More specifically,  assume that $v_1$ has \emph{polar} coordinates $(r, \theta)$. The angle $\theta$ is uniformly distributed in $(0,2\pi]$ and the probability density function of
$r$, which we denote by $\rho_N (r)$, is determined by a parameter $\alpha >0$ and is equal to
\begin{equation} \label{eq:pdf}
 \rho(r) = \rho_N (r) = \begin{cases}
\alpha {\sinh  \alpha r \over \cosh (\alpha R ) - 1}, & \mbox{if $0\leq r \leq R$} \\
0, & \mbox{otherwise}
\end{cases}.
\end{equation}
The aforementioned formulae for the area and the length of a circle of a given radius imply that if we set 
$\alpha =1$, the distribution described in~\eqref{eq:pdf} is the uniform distribution on $\D$ (under the hyperbolic metric). 
For general $\alpha > 0$ Krioukov et al.~\cite{ar:Krioukov} called this the \emph{quasi-uniform} distribution on $\D$. 
Let us remark that in fact  this is the uniform distribution on a disc of hyperbolic radius $R$ within $H^2_{-\alpha^2}$ (the hyperbolic plane that has curvature $-\alpha^2$).  


Given the point process $\V$ on $\D \subset H_{-1}^2$ and the fixed parameters $\alpha$ and $\nu$ we define the random graph $\G (N; \alpha, \nu)$  on the point-set of $\V$, where two distinct points
form an edge if and only if they are within (hyperbolic) distance $R$ from each other. Figure~\ref{fig:tube} shows the 
disc of radius $R$ around a point $p \in \D$. Thus, any point that falls 
inside this disc becomes connected to $p$. 
\begin{figure}[h]
\centering
\includegraphics[scale=0.25]{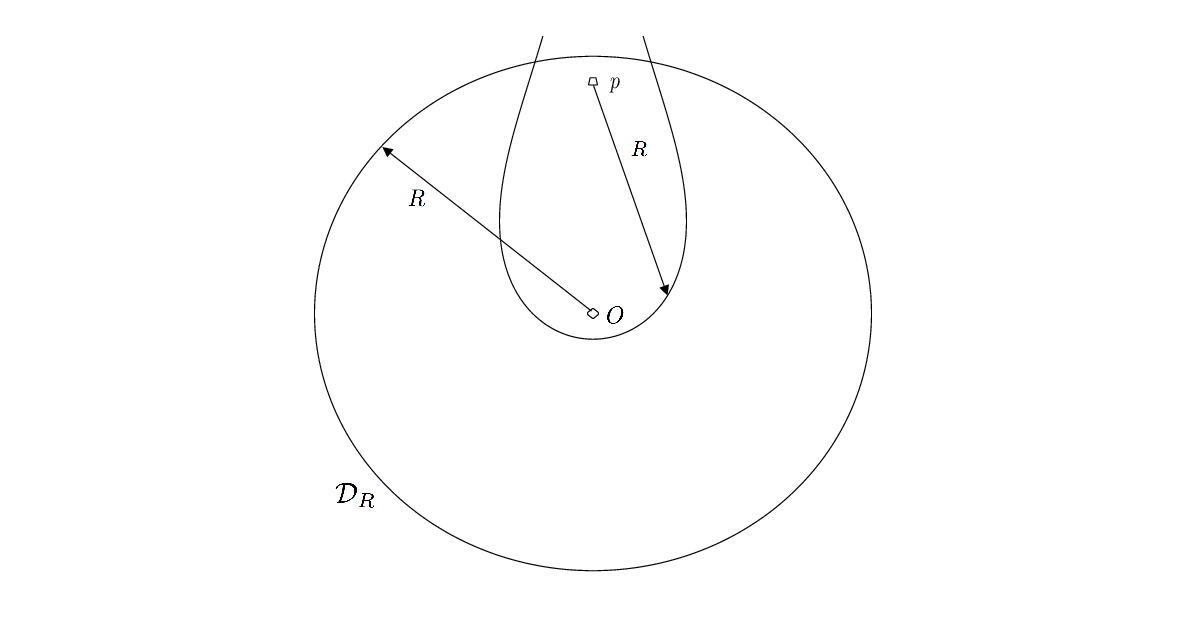}
 \caption{The disc of radius $R$ around point $p$.}
 \label{fig:tube}
 \end{figure}

Figure~\ref{fig:simulation} depicts results of a simulation of the $\Gnan$ model, for $N=1000$, $\nu =3$ and $\alpha = 1.8, 1$ and $0.7$, respectively. Observe now the change in the structure of the random graph as $\alpha$ crosses 1. 
We will comment on this in Section~\ref{sec:geom_asp}.
\begin{figure}[h]
\centering
\includegraphics[scale=0.2]{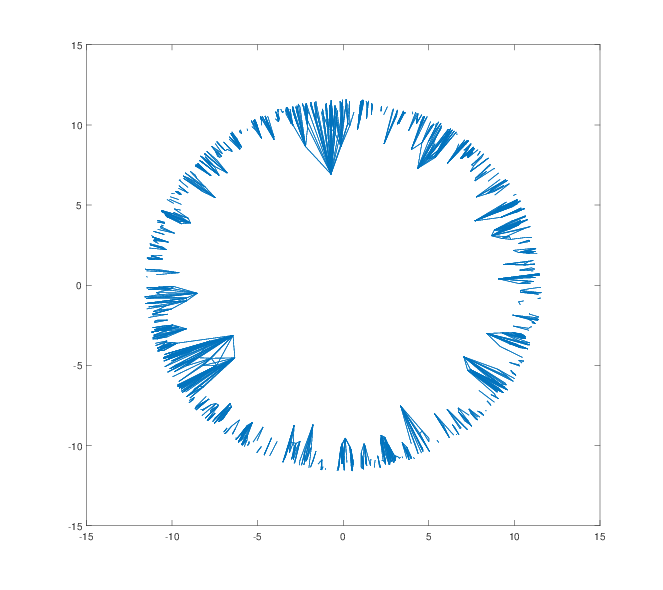}
\includegraphics[scale=0.2]{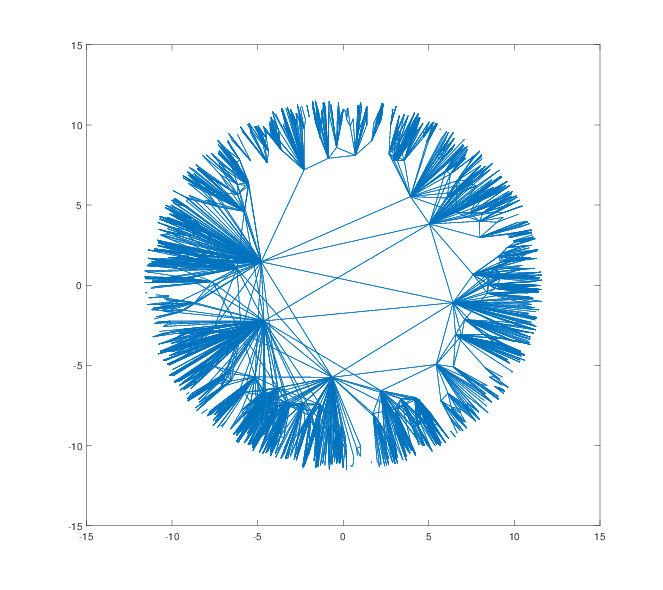}
\includegraphics[scale=0.2]{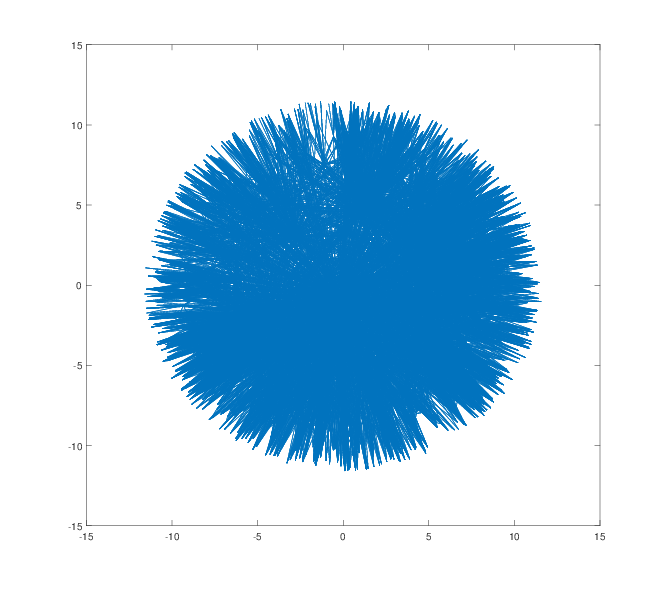}
 \caption{Instances for the $\Gnan$ for $N=1000$, $\nu =3$ and $\alpha =1.8, 1$ and $0.7$.}
 \label{fig:simulation}
 \end{figure}

\subsubsection*{Notation} We now introduce some notation which we use throughout out proofs. 
Let $a_N, b_N$ be two sequences of positive real numbers. We write $a_N \approx b_N$ to indicate that 
$a_N = \Theta (b_N)$, that is, there are real numbers $c,C>0$ such that $c b_N \leq a_N \leq C b_N$, for all natural numbers $N$. 
We also write $a_N \sim b_N$ to denote that $a_N / b_N \rightarrow \infty$, as $N\rightarrow \infty$. 

If $\mathcal{E}_N$ is an event on the probability space  $(\Omega_N, \P_N, \mathcal{F}_N)$, for each $N \in \mathbb{N}$,
we say that $\mathcal{E}_N$ \emph{occurs asymptotically almost surely (a.a.s.)} if $\P (\mathcal{E}_N) \rightarrow 1$ as 
$N \rightarrow \infty$. In our context, we mainly use the sequence of probability spaces that is induced by $\G(N;\alpha, \nu)$. 
However, later we introduce a variant of this model which is its \emph{Poissonisation}. We will be using the term \emph{a.a.s.}
for that model as well. 


\subsubsection{Some facts about $\G (N; \alpha, \nu)$} \label{sec:geom_asp}

We argue that the above model can be thought of as a geometrization of the random graph model that was introduced by 
F.~Chung and L.~Lu~\cite{ChungLu1+, ChungLuComp+} and is a special case of an inhomogeneous random graph. 
The notion of \emph{inhomogeneous random graphs} was introduced by S\"oderberg~\cite{ar:s02}, but
was defined more generally and studied in great detail by Bollob\'as, Janson and Riordan in~\cite{BJR}.
In its most general setting, there is an underlying compact metric space $\mathcal{S}$ equipped with a measure $\mu$ on its 
Borel $\sigma$-algebra. This is the space of \emph{types} of the vertices (defined below). A \emph{kernel} $\kappa$ is a bounded
real-valued, non-negative function on $\mathcal{S} \times \mathcal{S}$, which is symmetric and measurable. 
The vertices of the random graph can be understood as points in $\mathcal{S}$. 
If $x, y \in \mathcal{S}$, then the corresponding vertices are joined with probability 
${\kappa (x,y) \over N} \wedge 1$, independently of every other pair ($N$ is the total number of vertices). 
The points that are the vertices of the graph are approximately distributed according to $\mu$. 
More specifically, the empirical distribution function on the $N$ points converges weakly to $\mu$ as $N \rightarrow \infty$. 

Of particular interest is the case where the kernel function can be factorized and can be written $\kappa (x,y) = t(x)t(y)$; this is called a \emph{kernel of rank 1}. 
Intuitively, the function $t(x)$ can be thought of as the weight or the type of vertex $x$. It is approximately its expected
degree. In the special case where $t(x)$ follows a distribution that has a power law tail, the model becomes the so-called 
Chung-Lu model that was introduced in a series of papers~\cite{ChungLu1+, ChungLuComp+} (see also~\cite{bk:vdH}). 

We argue that in the random graph $\G (N; \alpha, \nu)$, the probability that two vertices are adjacent has this form.  
The proof of this fact relies on Lemma~\ref{lem:relAngle}, which we will state later and is proved in \cite{bode_fountoulakis_mueller}. 
It provides an approximate characterization of what it means for two points $u,v$ to
have hyperbolic distance at most $R$ in terms of their \emph{relative angle}, which we denote by $\theta_{u,v}$. 
This is defined as $\min \{ \hat{uOv}, 2 \pi - \hat{uOv} \}$. Note that $\theta_{u,v}\leq \pi$.  
For this lemma, we need the notion of the \emph{type} of a vertex. 
For a vertex $v \in \V$, if $r_v$ is the distance of  $v$ from the origin, that is, the radius of $v$, then we set $t_v = R - r_v$ -- 
we call this quantity the \emph{type} of vertex $v$. 
As we shall shortly see, the type of a vertex is approximately exponentially distributed. If we substitute $R-t$ for $r$ in 
(\ref{eq:pdf}), then assuming that $t$ is fixed that expression becomes asymptotically (as $N\to \infty$) equal to $\alpha e^{-\alpha t}$. 
Roughly speaking, Lemma~\ref{lem:relAngle} states that two vertices $u$ and $v$ of types $t_u$ and $t_v$ are within distance $R$
(essentially) if and only if $\theta_{u,v} < 2 \nu {e^{t_u/2} e^{t_v/2}}/N$. Hence, conditional on their types the probability that $u$ and
$v$ are adjacent is proportional to ${e^{t_u/2} e^{t_v/2}}/N$. If we set $t(u)= e^{t_u/2}$, then $\P (t(u) \geq x) = \P (t_u \geq 2 
\ln x)\approx e^{-2\alpha \ln x} = 1/x^{2\alpha}$. In other words, the distribution of $t(u)$ has a power-law tail with parameter $2\alpha$. 
Thus, the random graph $\G (N; \alpha, \nu)$ is a \emph{dependent} version of the Chung-Lu model that emerges naturally from the 
hyperbolic geometry of the underlying space. The fact that this is a random geometric graph gives rise to local 
clustering, which is missing in the Chung-Lu model. There, most vertices have tree-like neighborhoods.  

In fact, it can be shown that the degree of a vertex $u$ in $\G (N; \alpha, \nu)$ that has type $t_u$ is approximately distributed as 
a Poisson random variable with parameter proportional to $e^{t_u/2}$. 

Gugelmann, Panagiotou and Peter~\cite{ar:Kosta} showed that the degree of a vertex has a power law with exponent $2\alpha  +1$.
If $\alpha  > 1/2$, then the exponent of the power law may take any value greater than 2. 
When $1 > \alpha >  1/ 2$, this exponent is between 2 and 3.
They also showed that the average degree is a constant that depends on $\alpha$ and $\nu$,
and that the clustering coefficient (the probability of two vertices with a common
neighbor to be joined by an edge) of $\G (N;\alpha, \nu)$ is asymptotically  bounded away from $0$ with probability $1-o(1)$ as
$N\rightarrow \infty$. 

Furthermore, the last two authors together with M\"uller~\cite{bode_fountoulakis_mueller} showed that $\G (N;\alpha, \nu)$ with high
probability has a \emph{giant component}, that is, a connected component containing a linear number of
vertices if $1/2 <\alpha < 1$. When $\alpha > 1$, the size of the largest component is bounded by a function that is \emph{sublinear} in
$N$. This transition is also indicated in Figure~\ref{fig:simulation}. 
When $\alpha =1$, the existence of a giant component depends on the value of $\nu$. There is a critical value for $\nu$ around which a giant component emerges. 

\subsection{Results} 
In this contribution, we give an almost sure bound on the (graph) distance between two randomly chosen vertices that belong to 
the same connected component. 
We show that $\G (N;\alpha, \nu)$ is \emph{ultrasmall} when $\frac{1}{2} < \alpha < 1$, that is, when the degree distribution 
has a power law tail with exponent between  2 and 3. More specifically, we show that a.a.s. the graph distance between two randomly 
chosen vertices that belong to the same component is of order $\log \log N$. However, the diameter of $\G (N; \alpha, \nu)$ grows at least
logarithmically in $N$. This is a recent result of Kiwi and Mitsche~\cite{KiwiMitsche}, where they show that there is a connected component 
of diameter proportional to $\log N$.  They also derive an upper bound on the diameter showing that the diameter is at most proportional to $R^{1+C}$ a.a.s., 
for some positive constant $C$ that depends on the parameters of the model. More recently, Friedrich and Krohmer~\cite{FriedKroh15} 
improved the constant showing that the exponent is at most $1/(2(1-\alpha))$. They also show that if $\nu$ is small enough, then 
the exponent is equal to 1. 
Note that the Chung-Lu model exhibits logarithmic diameter~\cite{ChungLu1+}.

For $\alpha > 1$, we show that a.a.s. $\G (N;\alpha,\nu)$ is \emph{almost} ultrasmall: the graph distance between two 
randomly chosen vertices that belong to the same component is a.a.s. bounded by some polynomial of $\log \log N$.   
This range of $\alpha$ yields a power law degree distribution with exponent greater than 3. For this range, 
Chung and Lu~\cite{ChungLu1+} proved that the Chung-Lu model exhibits average distances of order $\log N$ asymptotically with 
high probability.  

Let $d_G(u,v)$ denote the graph distance between two vertices 
$u, v \in \V$. 
\begin{theorem} \label{thm:ultrasmall} 
Assume that $1/2 < \alpha < 1$ and 
let $\tau$ be such that $\tau^{-1} = \log \left( \frac{1}{2\alpha -1} \right)$. Let $u,v \in \V$ be a pair of distinct 
vertices chosen uniformly at random. 
For any $\zeta >0$, the following holds a.a.s.:  either $d_G (u,v) = \infty$ or $\left|\frac{d_G(u,v)}{\log R}-2\tau\right|<\zeta$. 
\end{theorem}
In this regime, $\Gnan$ does have a giant component and therefore for any two distinct vertices $u,v$ we have 
$d_G(u,v)<\infty$ with probability that is asymptotically bounded away from 0. Moreover, Kiwi and Mitsche \cite{KiwiMitsche} showed that the second largest component a.a.s. contains $o(N)$ vertices. 
Hence, if we considert two distinct vertices $u$ and $v$ from $\V$, a.a.s. either they belong to different 
components or they belong to the largest component. 
These facts together with the above theorem imply that if
the vertices $u$ and $v$ are selected uniformly at random 
from the largest component, then 
for any $\zeta >0$,  a.a.s. 
$\left|\frac{d_G(u,v)}{\log R}-2\tau\right|<\zeta$.

The upper bound (which is probably the most important) in the above result was also derived by Chung and Lu~\cite{ChungLu1+} 
for the Chung-Lu model with power law exponent between $2$ and $3$. That was under the assumption that the average degree 
is greater than 1. However, in our case a giant component is formed \emph{independently} of what the average degree is, as long
as $1/2 < \alpha <1$.  The full result for the Chung-Lu model can be found in~\cite{bk:vdH}.
The analogous result but in a stronger form which involves convergence in distribution was obtained 
by van der Hofstad et al.~\cite{ar:vdHHogm} . 



Our second result provides an upper bound on the typical distance between two connected vertices when $\alpha >1$. 
In this case there is no giant component a.a.s. However, the largest component contains polynomially\footnote{Note that by ``polynomial'' 
we mean a function of the form $f(x)=x^c$ where $c>0$ is a constant.} (in $N$) many vertices,
as there exist vertices of degree that scales polynomially in $N$ (and, of course, the neighbours of a vertex all belong to the same component). However, these components form also (almost) 
\emph{ultrasmall} worlds. 
\begin{theorem} \label{thm:almostultrasmall}
Let $\alpha>1$, $\eps>0$.
A.a.s. there is a subset $V'$ of vertices of $\Gnan$ of size $(1-o(1))N$ so that if $u,v\in V'$ and $d_G(u,v) < \infty$, then
$d_G(u,v) \leq \log^{1+\eps}\log N$.
\end{theorem}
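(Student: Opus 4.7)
My plan is to exploit the tree-like structure of components in the sub-critical regime $\alpha > 1$. Each non-trivial connected component $\cC$ of $\Gnan$ contains a vertex $v_\cC^\ast$ of maximal type within $\cC$. I will show that for a typical $u \in V'$ lying in $\cC$, there is a path from $u$ to $v_\cC^\ast$ of length at most $\tfrac{1}{2}\log^{1+\eps}\log N$; the triangle inequality then yields the claim for any two $u,v \in V'$ in the same component.

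First I would define $\V \setminus V'$ to collect every vertex whose local environment is atypical: those in angular sectors of width $\Theta(1/N)$ that are unusually dense, those whose type lies in narrow intervals where Poisson concentration fails, and those belonging to long low-type tendrils in their component (chains of low-type vertices that fail to approach the component centre). Working with the Poissonised model $\Pnan$, standard Poisson tail bounds guarantee $|\V \setminus V'| = o(N)$ a.a.s.

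The core argument is a \emph{descent} from $v_\cC^\ast$. A calculation based on Lemma~\ref{lem:relAngle} shows that a vertex $w$ of type $t_w$ has expected number of neighbours of type at least $s$ of order $e^{t_w/2}\,e^{(1/2-\alpha)s}$, so the largest type typically seen among the neighbours of $w$ is $s(t_w) = t_w/(2\alpha-1) + O(1)$. For $\alpha > 1$ we have $2\alpha-1 > 1$, hence $s(t_w) < t_w$: the graph naturally \emph{points downwards} in type. Starting from $v_\cC^\ast$, whose type is $\Theta(\log N)$ a.a.s., the descending breadth-first tree reaches type $\Theta(1)$ within roughly $\log\log N / \log(2\alpha-1)$ generations, and in particular covers every non-exceptional vertex of $\cC$ by that depth. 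The $\log^{\eps}\log N$ slack in the theorem absorbs a union bound over the $O(\log\log N)$ generations, each of which is well-behaved only up to a multiplicative $\log^{\eps/2}\log N$ correction in the number of descendants of the expected type; the product $O(\log\log N) \cdot \log^{\eps/2}\log N$ fits inside $\log^{1+\eps}\log N$ for large $N$.

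The main obstacle is the asymmetry of the regime $\alpha > 1$: higher-type neighbours are rare, so no naive \emph{ascending} walk works and one must argue globally via the highest-type vertex of each component. The most delicate point is controlling vertices of very small type ($t_v = O(1)$), which may attach to the main descending tree only through long chains of vertices whose types fail to approach $t^\ast$ geometrically. These chains are excluded from $V'$ via a first-moment estimate on the expected number of self-avoiding paths with prescribed non-monotone type sequence; this expectation becomes $o(1)$ once the path length exceeds $\log^{1+\eps}\log N$, completing the argument.
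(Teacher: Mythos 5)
Your proposal differs fundamentally from the paper's argument, and it contains a genuine gap at its central step.

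The paper's mechanism is \emph{angular}: for $\alpha>1$ a component never wraps around the origin, so it occupies a bounded sector, and the two angularly extremal vertices of the component anchor a ``spanning path'' that every vertex of the component lies above (and hence, by Fact~\ref{fact:triangle}, is adjacent to). The simultaneous breadth exploration sweeps clockwise and anticlockwise from $v$; crucially, the \emph{termination condition} is geometric---the sweep stops at round $i$ exactly when the angularly extremal vertices of round $i-1$ have no neighbours at larger angles, which happens with probability bounded away from zero once the maximum type $t_i$ in the frontier is $O(1)$. Combined with the supermartingale bound $\Ex{t_{i+1}\mid\mathcal{F}_i}\le\lambda_\alpha t_i$ with $\lambda_\alpha=\alpha/(2\alpha-1)<1$, one gets a stopping time of order $\log^{1+\eps}\log N$. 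The paper never argues per-component; each vertex $v$ gets its own umbrella a.a.s., and two umbrellas in the same component intersect by Corollary~\ref{cor:envMeet}.

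Your proposal replaces the angular anchor $\hat v_\ell,\hat v_r$ with the type anchor $v^\ast_\cC$, and this is where the argument breaks down. Reaching low type at depth $\approx\log\log N/\log(2\alpha-1)$ does \emph{not} imply that the descending BFS from $v^\ast_\cC$ has ``covered every non-exceptional vertex of $\cC$ by that depth'': a BFS does not terminate when the frontier types become small, it terminates when the component is exhausted, and a vertex of type $O(1)$ still has expected degree $\Theta(\nu)$. For $\nu$ large enough the branching through low-type vertices is supercritical, so the expected number of self-avoiding $L$-paths through types $\le T$ from a fixed vertex grows like $c(\nu,\alpha,T)^L$ with $c>1$; the first-moment estimate you invoke does not become $o(1)$ at $L=\log^{1+\eps}\log N$, and in fact cannot, in this generality. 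What saves the paper is exactly that it does not need to survey all of $\cC$, only the angular extremes, and the angular sweep has a probabilistic termination criterion that the vertical BFS lacks. A secondary point: $t_{v^\ast_\cC}=\Theta(\log N)$ holds only for exceptional components; a typical component (for $\alpha>1$ almost all of them are small) has maximum type $O(1)$. Finally, defining $V\setminus V'$ to include ``vertices belonging to long low-type tendrils'' makes the bound $|V\setminus V'|=o(N)$ exactly as hard as the statement you want to prove, so some non-circular control is needed; the umbrella construction and the supermartingale on the frontier type supply that control in the paper.
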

We believe that the above theorem can be strengthened in the following sense.  A.a.s. there is a ``representative'' set $V'$ (in the sense of the above theorem) such that for any $u,v \in V'$ we have 
$d_G(u,v)\leq C \log \log N$, for some constant $C>0$, that depends on the parameters of the model. More specifically, we believe that a typical vertex in a connected component has an ultra-short path to a ``hub'' vertex of the component, that is, a vertex of polynomially growing degree.

In the next section, we introduce the \emph{Poissonisation} of $\Gnan$ which is convenient for our calculations. Thereafter, we will 
state and prove some basic geometric facts regarding the hyperbolic plane, which allow us to express distances on the hyperbolic 
plane in terms of polar coordinates on $\D$. Subsequently, we proceed with the proof of Theorems~\ref{thm:ultrasmall} 
and~\ref{thm:almostultrasmall}. 

The main idea behind the proof of Theorem~\ref{thm:ultrasmall} makes use of the existence of a very dense core that is formed 
by those vertices that have type at least $R/2$. We show that if two vertices are connected, then most likely they have short paths to 
the core which itself is a complete graph. These paths, which we call \emph{exploding}, emerge also in the Chung-Lu model
~\cite{ChungLu1+,bk:vdH}.
 
\section{Preliminary results}

\subsection{Poissonisation}
Recall that $\D$ is the disk of hyperbolic radius $R$ around the origin $O$ within the Poincar\'e disk representation of the hyperbolic plane with curvature $-1$. 
It will be significantly easier to work in a setting where, instead of having exactly $N$ random points, the vertex set
is the point-set of a Poisson point process on $\D$ with intensity 
$$N \frac{1}{2\pi}\rho_N (r) dr d\theta .$$

 Two vertices/points are declared adjacent exactly as in $\Gnan$. 
We denote the resulting graph by $\Pnan$.
More specifically, the vertex set consists of the points of the above Poisson point process in $\D$ (see~\cite{Kingman}). 
In every measurable set $U\subseteq \D$, the number of points in $U$ follows the Poisson distribution with parameter equal to 
$N  \mu_\alpha (U)$ where we define

\begin{equation}
\mu_\alpha (U):= \frac{1}{2\pi}\int_U \rho_N (r) dr d\theta.\label{mu(U)Definition}
\end{equation}

Moreover, the numbers of points in any finite collection of pairwise disjoint measurable subsets of $\D$ are
independent Poisson-distributed random variables. 

Furthermore, several of our results concern the $\G (N; \alpha, \nu)$ distribution conditional on a certain subset of points being equal to
$X$. Of course, such an event has probability equal to 0, and thus, to make sense of this one defines the \emph{Palm distribution}. This was first defined by Palm~\cite{ar:palm} in the context of stationary point processes on the real line - see the article of Jagers~\cite{ar:jaegers} and the references therein for a rigorous exposition of this concept.
Let us begin with the $\Pnan$ model. 
For a certain measurable subset $U \subset \D$ such that $\D\setminus U$
has positive Lebesgue measure, the vertex set of the random graph $\mathcal{P}_{X,U}(N;\alpha, \nu)$ consists of $X \subset \D \setminus U$
together with set of points of a Poisson process on 
$\D \setminus U$  with intensity 
$$(N-|X| )\frac{\rho_N (r)}{\mu_\alpha (\D \setminus U)} dr d\theta .$$
(Here and later we assume that $N > |X|$.) 
Hence, this process ``produces" $N-|X|$ points on average
and, therefore, the random graph has $N$ vertices in total, 
on average.

Similarly, for the $\Gnan$ model, its corresponding Palm distribution is 
defined through the $\mathcal{G}_{X,U}(N;\alpha, \nu)$ model. 
The vertex set of this random graph is 
the set $X \subset \D\setminus U$ together with the set of points that are the outcomes of the random variables $v_{|X|+1},\ldots, v_N $ of the set $V_N$, conditional on not being in the set $U$. Two points are adjacent as in the $\Gnan$ model. 
Let us observe that in the particular case where 
$X, U= \emptyset$, the resulting random graph is distributed as $\mathcal{G}(N;\alpha, \nu)$. 

 
Let $X$ be a set of points in $\D$ and let $U\subset \D$ be a measurable set such that $X\cap U=\emptyset$. Let 
$\mathcal{A}_{X}$ be an event in the probability space of the random graph $\mathcal{P}_{X,U}(N;\alpha, \nu)$.  
We call  $\mathcal{A}_{X}$
\emph{non-decreasing} if 
$$ \P_{\mathcal{P}_{X,U}(N;\alpha, \nu)} ( \mathcal{A}_X \ | \ \mathrm{Po}(N-|X|)=N_1) \leq  
\P_{\mathcal{P}_{X,U}(N;\alpha, \nu)} ( \mathcal{A}_X \ | \ \mathrm{Po}(N-|X|)=N_2), $$
whenever $N_1 \leq N_2$. 
If the opposite inequality holds, we call the property \emph{non-increasing}. 


\begin{lemma}\label{lem:poisson_cond}
If $\mathcal{A}_X$ is a non-decreasing event that is associated with a certain set of vertices $X$ and a measurable $U\subset \D$ such that $X\cap U=\emptyset$,
where $\D\setminus U$ has positive Lebesgue measure.

For any $N$ that is large enough
$$ 
\P_{\mathcal{P}_{X,U}(N;\alpha, \nu)} (\mathcal{A}_X ) \geq \frac{1}{4} \P_{\mathcal{G}_{X,U}(N;\alpha, \nu)} ( \mathcal{A}_X ). 
$$
 The same holds, if $\mathcal{A}_X$ is non-increasing and $N$ is large enough with respect to $|X|$.
\end{lemma}

\begin{proof}
Suppose first that $\mathcal{A}_X$ is non-decreasing. We have
\begin{eqnarray*}
\lefteqn{\P_{\mathcal{P}_{X,U}(N;\alpha, \nu)} (\mathcal{A}_X ) =} \\
&   &\sum_{N'=0}^\infty \P_{\mathcal{P}_{X,U}(N;\alpha, \nu)} ( \mathcal{A}_X \ | \ \mathrm{Po}(N-|X|)=N')\P(\mathrm{Po}(N)=N')\\
&\geq& \sum_{N'=N-|X|}^\infty \P_{\mathcal{P}_{X,U}(N;\alpha, \nu)} ( \mathcal{A}_X \ | \ \mathrm{Po}(N-|X|)=N')\P(\mathrm{Po}(N)=N')\\
&\geq& \sum_{N'=N-|X|}^\infty \P_{\mathcal{P}_{X,U}(N;\alpha, \nu)} ( \mathcal{A}_X \ | \ \mathrm{Po}(N-|X|)=N-|X|)\P(\mathrm{Po}(N)=N').
\end{eqnarray*}

Observe that $$\P_{\mathcal{P}_{X,U}(N;\alpha, \nu)} ( \mathcal{A}_X \ | \ \mathrm{Po}(N-|X|)=N-|X|)=\P_{\mathcal{G}_{f(S,X),U}(N;\alpha, \nu)} ( \mathcal{A}_X )$$

Thus,
\begin{eqnarray*}
\P_{\mathcal{P}_{X,U}(N;\alpha, \nu)} (\mathcal{A}_X )
&\geq& \sum_{N'=N-|X|}^\infty \P_{\mathcal{G}_{X,U}(N;\alpha, \nu)} ( \mathcal{A}_X )\P(\mathrm{Po}(N)=N')\\
&=&\P_{\mathcal{G}_{X,U}(N;\alpha, \nu)} ( \mathcal{A}_X )\P(\mathrm{Po}(N)\geq N-|X|)\\
&>&\frac{1}{4}\P_{\mathcal{G}_{X,U}(N;\alpha, \nu)}( \mathcal{A}_X ).
\end{eqnarray*}
where the last line holds $N$ large enough (by an application of, say, the central limit theorem).

For $\mathcal{A}_X$ non-increasing, the proof is similar, bounding the sum by taking only the terms where $N'\leq N-|X|$:
\begin{eqnarray*}
\lefteqn{\P_{\mathcal{P}_{X,U}(N;\alpha, \nu)} (\mathcal{A}_X ) =}\\
&  &\sum_{N'=0}^\infty \P_{\mathcal{P}_{X,U}(N;\alpha, \nu)} ( \mathcal{A}_X \ | \ \mathrm{Po}(N-|X|)=N')\P(\mathrm{Po}(N)=N')\\
&\geq& \sum_{N'=0}^{N-|X|} \P_{\mathcal{P}_{X,U}(N;\alpha, \nu)} ( \mathcal{A}_X \ | \ \mathrm{Po}(N-|X|)=N')\P(\mathrm{Po}(N)=N')\\
&\geq& \sum_{N'=0}^{N-|X|} \P_{\mathcal{P}_{X,U}(N;\alpha, \nu)} ( \mathcal{A}_X \ | \ \mathrm{Po}(N-|X|)=N-|X|)\P(\mathrm{Po}(N)=N')\\
&=&\sum_{N'=0}^{N-|X|} \P_{\mathcal{G}_{X,U}(N;\alpha, \nu)} ( \mathcal{A}_X )\P(\mathrm{Po}(N)=N')\\
&=&\P_{\mathcal{G}_{X,U}(N;\alpha, \nu)} ( \mathcal{A}_X )\P(\mathrm{Po}(N)\leq N-|X|)\\
&>&\frac{1}{4}\P_{\mathcal{G}_{X,U}(N;\alpha, \nu)}( \mathcal{A}_X ),
\end{eqnarray*}
if $N$ is large enough with respect to $|X|$. 
\end{proof}

Let $\mathcal{A}_n$ denote a set of graphs on $\{1,\ldots, n\}$ that is closed under automorphisms.
We call a family $\mathcal{A}=\{\mathcal{A}_n\}_{n \in \mathbb{N}}$ of graphs \emph{(vertex-) non-decreasing}, if $G-v \in \mathcal{A}_{n-1}$ for any\footnote{$G-v \in \mathcal{A}_{n-1}$ means that $G-v$ is isomorphic to a member of $\mathcal{A}_{n-1}$}
$v\in V(G)$ implies $G\in\mathcal{A}_n$.
Similarly, we call the family \emph{(vertex-) non-increasing}, if $G-v\notin \mathcal{A}_{n-1}$ for any $v\in V(G)$ implies
$G\notin\mathcal{A}_n$.

We interpret $\Pnan\notin\mathcal{A}$ as follows. 
Let $n'$ denote the random variable that is the number 
of  points of the Poisson process. If we label these by the 
numbers $\{1, \ldots, |V(\Pnan)| \}$ and consider the graph 
with this vertex set, then this belongs to $\mathcal{A}_{|V (\Pnan)|}$.

We assume that the points that form vertex set of $\Gnan$ inherit the
labeling from the random variables $\{v_1,\ldots v_N\}$. That is, the 
point that is the outcome of $v_i$ is labeled $i$.  
Hence, writing $\Gnan\notin\mathcal{A}$ means that 
the graph obtained by relabeling the vertices of $\Gnan$ by
$\{1,\ldots, N\}$ as above belongs to $\mathcal{A}_N$.

Let us observe that setting $X=\emptyset$ and $U=\emptyset$ and applying Lemma \ref{lem:poisson_cond}, we can say for $\mathcal{A}$ non-decreasing or non-increasing, $\P(\Pnan\notin\mathcal{A})=o(1)$ implies $\P(\Gnan\notin\mathcal{A})=o(1)$.
The results above will allow us to transfer results from the Poisson model into the $\Gnan$ model.

Finally, the following useful fact follows directly from the definition of the process, using the measure defined for the distribution of the points.

\begin{fact}\label{fct:empty}
	Let $A$ be a subset of $\D\setminus U$, for some measurable subset $U \subset \D$, and 
    $X$ be a set of vertices located in $\D$, such that $X\cap (U \cup  A) = \emptyset$.  
	Let $N_A$ be the expected number of vertices in $A$, in $\mathcal{G}_{X,U}(N;\alpha, \nu)$, and denote by $\mathcal{E}_A$ the event
    that $A$ is empty.
	We have
	\[\P_{\mathcal{P}_{X,U}(N;\alpha, \nu)}(\mathcal{E}_A)=\exp(-N_A).\]
\end{fact}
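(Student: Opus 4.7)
The plan is to derive this directly from the defining property of the Poisson point process. In the model $\mathcal{P}_{X,U}(N;\alpha,\nu)$, the vertex set is $X$ together with the atoms of a Poisson point process on $\D\setminus U$ whose intensity measure is proportional to the hyperbolic area measure $\mathrm{Area}_\alpha(\cdot)$ with total mass $N-|X|$; concretely, the mean number of atoms in any measurable $B\subseteq \D\setminus U$ equals $(N-|X|)\,\mathrm{Area}_\alpha(B)/\mathrm{Area}_\alpha(\D\setminus U)$.

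First I would note that, since $A\cap U=\emptyset$ and $A$ is measurable, $A$ is a measurable subset of $\D\setminus U$. The standard property of a Poisson process (see Kingman~\cite{Kingman}) says that the number of atoms falling in $A$ is a Poisson random variable whose parameter is the intensity measure of $A$, and by the definition of $N_A$ this parameter is precisely $N_A$ (the expected number of vertices of the random graph in $A$, after one observes that $A\cap X=\emptyset$ is implicit: otherwise $\mathcal{E}_A$ has probability zero and the stated identity cannot hold). The event $\mathcal{E}_A$ is the event that this Poisson count equals zero, so $\P_{\mathcal{P}_{X,U}(N;\alpha,\nu)}(\mathcal{E}_A)=\exp(-N_A)$.

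There is essentially no obstacle: the statement is just the ``void probability'' of a Poisson process restricted to a measurable set. The only point requiring a brief check is the bookkeeping, namely that once we fix the positions of $X$ and insist on $U$ being empty, the remaining random vertices still form a Poisson process on $\D\setminus U$ with the intensity described above, so that the parameter appearing in the exponent really is $N_A$ and not some rescaled quantity.
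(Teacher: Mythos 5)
Your proof is correct and is essentially the argument the paper intends; the paper simply remarks that the fact "follows directly from the definition of the process" without spelling out the void-probability step, and your invocation of the Poisson void probability with intensity $N_A$ is exactly that step. Your observation that $A\cap X=\emptyset$ must be implicitly assumed (else $\mathcal{E}_A$ has probability zero while $\exp(-N_A)>0$) is a correct and worthwhile clarification of a small omission in the statement.
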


In order to avoid overloading, we will make a convention when 
passing to the Poisson model $\mathcal{P}_{X,U} (N;\alpha, \nu)$. Whenever we deal 
with the Palm distribution of $\mathcal{G} (N;\alpha, \nu)$,
conditioning on the positions of 
a finite collection of vertices $\{v_1, \ldots , v_\ell \} \subseteq \V$, we will be writing
$\mathcal{P}_{\{v_1, \ldots , v_\ell \},U}(N;\alpha, \nu)$
to denote the Poisson model consisting of the points 
of the vertices $\{v_1, \ldots , v_\ell \}$ together with 
the points of the Poisson process on $\D \setminus U$ 
with $N-\ell$ points in expectation. In other words, the set 
$X$ will consist of the outcomes of the random variables 
$\{v_1, \ldots , v_\ell \}$.

\subsection{Geometric properties of $\D$}
Recall that for any two points/vertices $u, v$ on $\D$, their \emph{relative angle} $\theta_{u,v}$ is defined as 
$\min \{ \hat{uOv}, 2 \pi - \hat{uOv} \}$. Recall also that $\theta_{u,v}\leq \pi$. 
We state and prove a simple geometric fact, which we will use several times in the following sections.
\begin{claim} \label{fact:triangle} Consider three vertices $z,y$ and $w$, on $\D$ (in the hyperbolic plane with curvature $-1$), 
such that $d_H (z,w) <R$ and $w$ is at the anticlockwise 
direction of $z$ whereas $y$ is between $z$ and $w$. 
If $t_y > t_w$, then $d_H (y,z) < R$. 
\end{claim}
\begin{proof}
This is the case as the point $y'$ of type equal to that of $y$ with $\theta_{y'w} = 0$ is still at distance less than $R$ 
from $z$. If we move this clockwise towards $z$, the distance will remain smaller than $R$, as $w$ will be at the
anticlockwise side of $y'$. 
\end{proof}


The following lemma provides a useful (almost) characterization of the fact that two vertices are within hyperbolic distance $R$, given their types.
The lemma reduces a statement about hyperbolic distances to a statement about the relative angle between two points.
Its proof can be found in \cite{Fount13+} and \cite{bode_fountoulakis_mueller}.
For two points $p,v$, let 
$$\hat{\theta}_{p,v}:=2(1+\eps ) e^{\frac{t_p+t_v- R}{2}}
= 2(1+\eps ) \frac{\nu}{N} e^{\frac{t_p+t_v}{2}} \text{, and}$$
$$\check{\theta}_{p,v}:=2(1-\eps ) e^{\frac{t_p+t_v- R}{2}}
=  2(1-\eps ) \frac{\nu}{N} e^{\frac{t_p+t_v}{2}}\text{.}$$
For $c_0 = c_0(\eps)$, that depends on $\eps$ as in the following lemma, we call the set 
$$T_{\eps}^+ (v):=\left\{p \in D_R  \ : \ t_p+t_v -R < -c_0, \ \theta_{p,v} \leq \hat{\theta}_{p,v}
\right\}$$  the  \emph{outer tube} of $v$. 
Similarly, we call the set 
$$T_{\eps}^- (v) := \left\{p \in D_R  \ : \ t_p+t_v -R < -c_0, \ \theta_{p,v} \leq \check{\theta}_{p,v} 
\right\}$$
the \emph{inner tube} of $v$. 

\begin{figure}[h]
\centering
\includegraphics[scale=0.2]{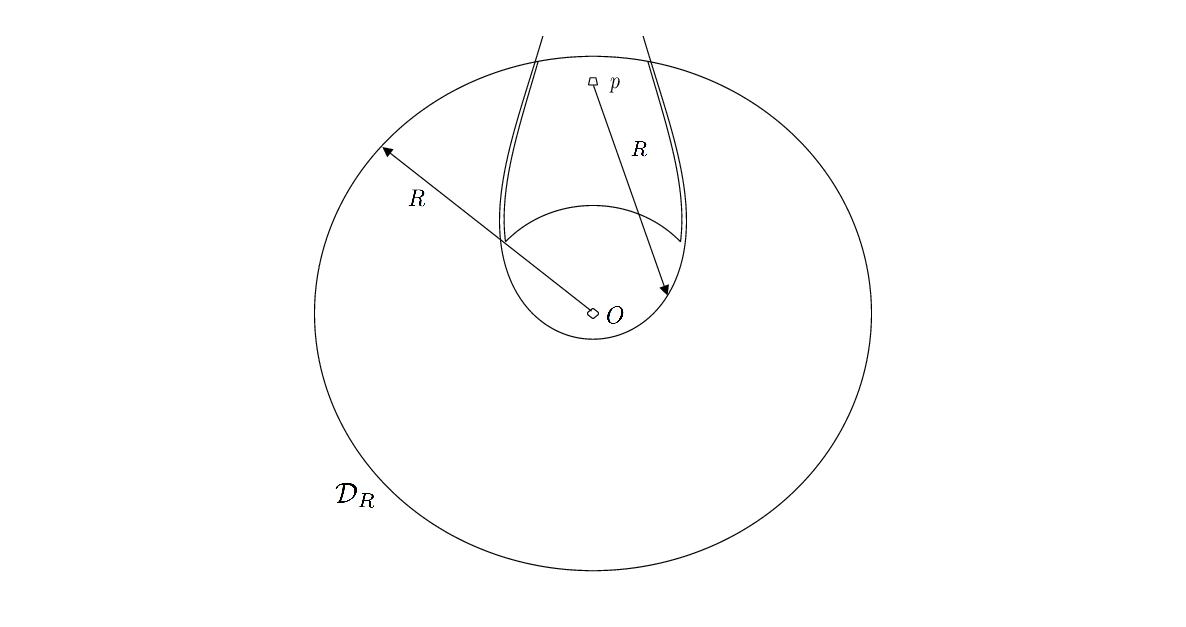}
\includegraphics[scale=0.2]{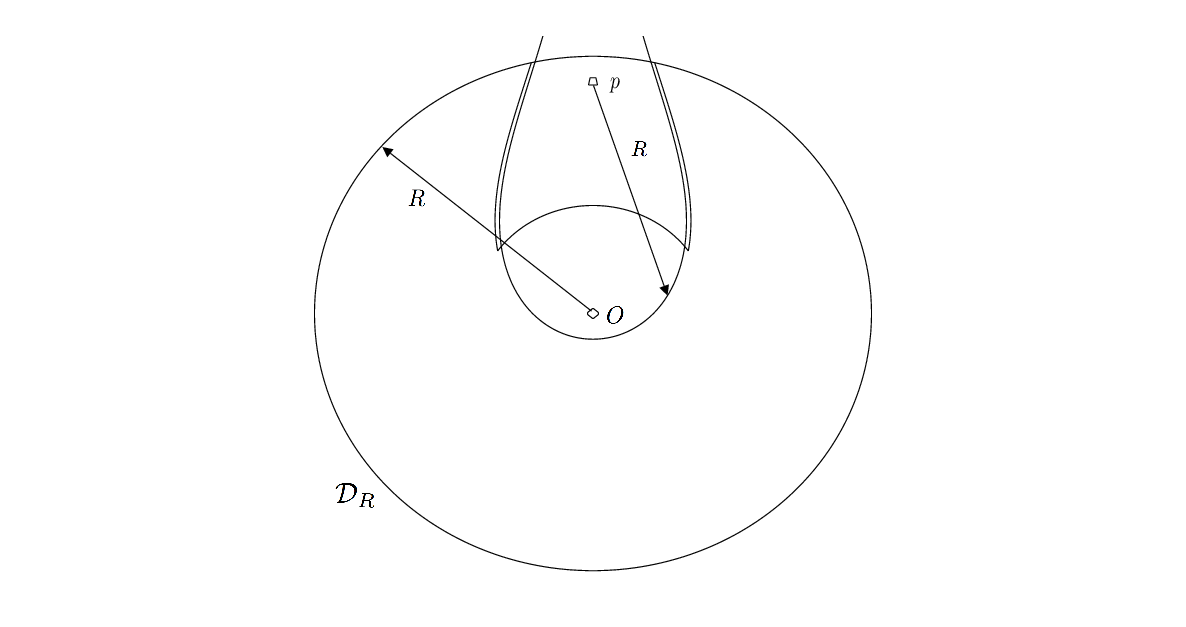}
 \caption{The inner and the outer tube around point $p$.}\label{fig:inn-out-tube}
 \end{figure}

\begin{lemma}\label{lem:relAngle}
For any $\eps>0$ there exists an $N_0>0$ and a $c_0>0$ such that for any $N>N_0$ and $u,v\in\D$ with $t_u+t_v < R-c_0$
the following hold.
\begin{itemize}
			\item If $u \in T_{\eps}^-(v)$, then $d_H (u,v) < R$.
			\item If $u \not \in T_{\eps}^+(v)$, then $d_H (u,v) > R$.
		\end{itemize}
\end{lemma}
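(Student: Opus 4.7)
The plan is to apply the hyperbolic law of cosines to the triangle $Ouv$ in $\D$. Writing $r_u = R - t_u$, $r_v = R - t_v$, and setting $\theta = \theta_{u,v}$, one has
$$\cosh d(u,v) = \cosh r_u \cosh r_v - \sinh r_u \sinh r_v \cos \theta,$$
so $d(u,v) \leq R$ if and only if $\cos \theta \geq F$, where
$$F := \frac{\cosh r_u \cosh r_v - \cosh R}{\sinh r_u \sinh r_v}.$$
The lemma then reduces to showing that, uniformly over all $(u,v)$ with $t_u + t_v < R - c_0$, the arc-cosine of this threshold satisfies $\arccos F = 2\, e^{(t_u+t_v-R)/2}(1 + \delta)$ where $|\delta|$ can be made smaller than any prescribed $\eps'$ by taking $c_0$ large enough, uniformly in $N$ large.

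The first step is an asymptotic expansion of $F$. Substituting $\cosh x = \tfrac12 e^x(1 + e^{-2x})$ and $\sinh x = \tfrac12 e^x(1 - e^{-2x})$ into the numerator and denominator, the leading terms yield
$$F \;=\; 1 - 2\, e^{t_u + t_v - R}\,(1 + \eta),$$
where $\eta$ is an error of the form $O(e^{-2\min(r_u,r_v)}) + O(e^{-2R})$. Since $t_u + t_v < R - c_0$, the subtracted correction $2e^{t_u + t_v - R}$ is at most $2e^{-c_0}$, so $F$ is close to $1$ and $\arccos F$ is small. In the degenerate case $\min(r_u, r_v) = O(1)$ (one point near the origin), the error term $e^{-2\min(r_u, r_v)}$ is not automatically small; here one reverts to the exact identity and carries out an elementary direct computation. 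Outside this degenerate range, the expansion is valid with error tending to $0$ as $c_0 \to \infty$.

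The second step is to Taylor expand $\cos\theta = 1 - \theta^2/2 + O(\theta^4)$ for the (necessarily small) $\theta$ in question and solve $\cos\theta = F$, obtaining $\theta = 2\, e^{(t_u + t_v - R)/2}(1 + \delta')$ with $|\delta'|$ small. Choosing $c_0 = c_0(\eps)$ large enough so that this multiplicative error lies in $(1-\eps, 1+\eps)$ delivers both conclusions: if $u \in T_\eps^-(v)$ then $\theta \leq 2(1-\eps)e^{(t_u+t_v-R)/2}$ is strictly below the true threshold, so $\cos\theta > F$ and $d(u,v) < R$; if $u \notin T_\eps^+(v)$ then $\theta > 2(1+\eps)e^{(t_u+t_v-R)/2}$ exceeds it and $d(u,v) > R$. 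The $\min$ with $\pi$ appearing in $\check\theta, \hat\theta$ simply caps an expression that would otherwise exceed the range of an angle; for $c_0$ large, the raw expression is far below $\pi$, so the cap is inactive and the argument is unaffected.

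The main obstacle is making both asymptotic expansions \emph{uniform} in $(r_u, r_v)$ across the entire admissible region and tracking constants tightly enough to produce the sharp $1 \pm \eps$ factors in $\check\theta, \hat\theta$. The delicate part is the regime where one of $r_u, r_v$ is $O(1)$, since the relative error in $\sinh r_u \sinh r_v$ is not automatically $o(1)$ there; this is handled by a separate but elementary estimate using the exact hyperbolic identities, exploiting that in this regime the other radius must be at least $R - O(1)$ by the hypothesis $r_u + r_v > R + c_0$.
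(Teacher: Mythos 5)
The paper does not prove this lemma itself --- it is cited from \cite{Fount13+} and \cite{bode_fountoulakis_mueller} --- so there is no internal proof to compare against, but your approach (hyperbolic law of cosines at the origin, asymptotic expansion of the threshold angle) is the standard one used in those references, and the outline is sound.

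One simplification you should notice: the ``degenerate case'' $\min(r_u, r_v) = O(1)$ that you flag and propose to handle separately is in fact \emph{vacuous} under the hypothesis. Since $t_u + t_v < R - c_0$ and $t_u, t_v \ge 0$, each type individually satisfies $t_u, t_v < R - c_0$, hence $r_u = R - t_u > c_0$ and $r_v = R - t_v > c_0$. Both radii exceed $c_0$, so for $c_0 = c_0(\eps)$ large every error term of the form $e^{-2r_u}$, $e^{-2r_v}$, $e^{-2(r_u+r_v)}$ is uniformly at most $e^{-2c_0}$ and no separate estimate near the origin is needed. You half-see this when you remark that $r_u + r_v > R + c_0$ forces the other radius near $R$, but the cleaner statement is that the hypothesis bounds \emph{both} radii below by $c_0$.

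A second point worth being careful about is the precise form of the multiplicative error $\eta$. Writing $1 - F$ explicitly gives
\[
1 - F = \frac{2e^{t_u+t_v-R}\bigl(1 + e^{-2R} - e^{t_u - t_v - R} - e^{t_v - t_u - R}\bigr)}{1 - e^{-2r_u} - e^{-2r_v} + e^{-2(r_u+r_v)}},
\]
and the relevant observation is that $e^{t_u-t_v-R} = e^{-2r_v}/e^{t_u+t_v-R}$ is itself at most $e^{-c_0}$ (and symmetrically for the other term), so the additive corrections $e^{-2r_u}, e^{-2r_v}$ really are $O(e^{-c_0}) \cdot e^{t_u+t_v-R}$ rather than merely $O(e^{-2c_0})$ additively. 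This matters because $e^{t_u+t_v-R}$ can itself be as small as $e^{-R}$, so a bare additive error $O(e^{-2c_0})$ would not automatically be absorbable; you need the sharper comparison to get the multiplicative $(1\pm\eps)$ factor uniformly. Once this is in place, $\arccos F = 2e^{(t_u+t_v-R)/2}(1+O(e^{-c_0}))$ follows from the Taylor expansion exactly as you describe, and both directions of the lemma are immediate.
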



\subsection{Properties of $\Gnan$}

We state some general results about the graphs, the proofs of which can be found in \cite{bode_fountoulakis_mueller}.

\begin{lemma}\label{lem:dist}
Let $\bar\rho(t)$ be the probability density function of the types.
For any $\eps \in (0,1)$, uniformly for $0\leq t<(1-\eps) R$ as $N\to \infty$ we have
\begin{equation}
 \bar\rho(t)=\rho(R-t)=(1+o(1))\alpha e^{-\alpha t}.
\end{equation}
\end{lemma}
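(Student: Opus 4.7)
The plan is to do a direct asymptotic computation. Since the type of a vertex is defined as $t_v = R - r_v$, the density of $t$ is obtained from the density of $r$ by the change of variable $r = R - t$, so $\bar\rho(t) = \rho_N(R-t)$ for $0 \le t \le R$. Substituting into \eqref{eq:pdf} yields
\[
\bar\rho(t) \;=\; \alpha \,\frac{\sinh(\alpha(R-t))}{\cosh(\alpha R)-1}.
\]
The task is therefore to show that, uniformly in $0 \le t < (1-\eps)R$, this expression is asymptotic to $\alpha e^{-\alpha t}$.

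For the denominator, as $N \to \infty$ we have $R = 2\log(N/\nu) \to \infty$, so
\[
\cosh(\alpha R) - 1 \;=\; \tfrac{1}{2}e^{\alpha R}\bigl(1 + e^{-2\alpha R} - 2e^{-\alpha R}\bigr) \;=\; \tfrac{1}{2} e^{\alpha R}\,(1 + o(1)),
\]
where the $o(1)$ term depends only on $R$ and not on $t$. For the numerator, I would write
\[
\sinh(\alpha(R-t)) \;=\; \tfrac{1}{2}e^{\alpha(R-t)}\bigl(1 - e^{-2\alpha(R-t)}\bigr).
\]
The key observation that drives the uniformity is that for $t < (1-\eps)R$ we have $R - t > \eps R$, so
\[
e^{-2\alpha(R-t)} \;<\; e^{-2\alpha \eps R} \;=\; o(1)
\]
uniformly in $t$ on the allowed range. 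Hence $\sinh(\alpha(R-t)) = \tfrac{1}{2}e^{\alpha(R-t)}(1+o(1))$ uniformly.

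Combining these two asymptotics,
\[
\bar\rho(t) \;=\; \alpha\, \frac{\tfrac{1}{2} e^{\alpha(R-t)}(1+o(1))}{\tfrac{1}{2} e^{\alpha R}(1+o(1))} \;=\; (1+o(1))\,\alpha e^{-\alpha t},
\]
with the $o(1)$ uniform in $0 \le t < (1-\eps)R$. There is no real obstacle here; the only thing to be careful about is the uniformity of the error term, which is precisely where the hypothesis $t < (1-\eps)R$ enters, ensuring that $R-t$ grows linearly in $R$ and hence that the exponentially small correction $e^{-2\alpha(R-t)}$ is controlled independently of $t$.
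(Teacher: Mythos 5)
Your proof is correct, and it is the natural direct computation: substitute $r = R-t$ into the radial density, expand $\sinh$ and $\cosh$ into exponentials, and observe that the hypothesis $t < (1-\eps)R$ forces $R - t > \eps R \to \infty$, which makes the correction factors $1 - e^{-2\alpha(R-t)}$ and $1 + e^{-2\alpha R} - 2e^{-\alpha R}$ uniformly $1 + o(1)$. The paper itself does not reprove this lemma (it cites \cite{bode_fountoulakis_mueller}), but your argument is exactly the computation one expects and needs, with the key point --- uniformity coming from the lower bound on $R-t$ --- correctly identified.
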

The following fact is an immediate consequence of the above. 
\begin{corollary}\label{lem:innerEmpty}
	Let $\omega:\mathbb{N}\rightarrow\mathbb{N}$ be an increasing function such that $\omega (N) \rightarrow\infty$ as $N\rightarrow\infty$.
	The expected number of vertices of type at least $R/(2\alpha) + \omega (N)$ in $\Gnan$ is $o(1)$.
	Hence, with probability $1-o(1)$ all vertices in $V_N$ have type at most $\frac{1}{2\alpha}R+\omega(N)$.
\end{corollary}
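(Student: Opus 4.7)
The plan is to prove the corollary by a direct first-moment computation using Lemma~\ref{lem:dist}. I split into two trivial versus nontrivial regimes: if $\frac{R}{2\alpha}+\omega(N)\geq R$ for large $N$ (which happens automatically when $\alpha\leq 1/2$, or when $\omega$ grows fast enough), there are no vertices of such type at all, since types are supported on $[0,R]$. So I focus on the case $\alpha>1/2$ and $\omega(N)=o(R)$, in which $s_N := \frac{R}{2\alpha}+\omega(N)\leq (1-\eps)R$ for some fixed $\eps>0$ and all $N$ large enough.

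In this nontrivial regime I would compute $\P[t_v\geq s_N]$ for a single vertex $v$. Since $t_v=R-r_v$ with $r_v$ distributed according to $\rho$, a direct integration of $\rho$ (or equivalently of $\bar{\rho}$) gives
\[
\P[t_v\geq s_N]=\frac{\cosh(\alpha(R-s_N))-1}{\cosh(\alpha R)-1}.
\]
Alternatively, since $s_N$ lies well below $(1-\eps)R$, Lemma~\ref{lem:dist} lets me write $\bar{\rho}(t)=(1+o(1))\alpha e^{-\alpha t}$ uniformly for $t\in[s_N, (1-\eps)R]$, and then integrate against the true density on the tail $[(1-\eps)R, R]$ (whose contribution is negligible). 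Either way, I get
\[
\P[t_v\geq s_N]=(1+o(1))\,e^{-\alpha s_N}=(1+o(1))\,e^{-R/2}e^{-\alpha\omega(N)}.
\]

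Using the defining relation $N=\nu e^{R/2}$, so that $e^{-R/2}=\nu/N$, the expected number of vertices of type at least $s_N$ equals
\[
N\cdot\P[t_v\geq s_N]=(1+o(1))\,\nu\, e^{-\alpha\omega(N)}=o(1),
\]
since $\omega(N)\to\infty$. (For the Poissonised variant $\Pnan$ the identical estimate holds because the intensity of the Poisson point process integrates to $N$ times the probability density of a single vertex.) The second statement then follows by Markov's inequality: if $X$ denotes the nonnegative integer-valued number of vertices of type at least $s_N$, then $\P[X\geq 1]\leq\Ex{X}=o(1)$, so a.a.s.\ every vertex in $V_N$ has type at most $\frac{R}{2\alpha}+\omega(N)$.

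I do not expect any serious obstacle here: the only mild subtlety is ensuring the $(1+o(1))$ from Lemma~\ref{lem:dist} really holds up to $s_N$, which requires the cutoff $s_N\leq (1-\eps)R$ and hence the case split above. Once that is handled, everything reduces to the elementary exponential tail bound and the identity $e^{-R/2}=\nu/N$.
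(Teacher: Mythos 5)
Your proof is correct and takes essentially the same approach the paper implicitly intends: the paper states the corollary as an ``immediate consequence'' of Lemma~\ref{lem:dist}, and your argument is precisely the first-moment computation from the exponential type density $\bar\rho(t)=(1+o(1))\alpha e^{-\alpha t}$ followed by Markov's inequality, using $N=\nu e^{R/2}$ to cancel the $e^{-R/2}$ factor. The preliminary case split (trivial when $R/(2\alpha)+\omega(N)\geq R$, nontrivial when $\alpha>1/2$ and $\omega(N)=o(R)$) is a sensible detail the paper elides, and correctly ensures the uniformity range of Lemma~\ref{lem:dist} applies.
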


\section{Proof of Theorem~\ref{thm:ultrasmall}: upper bound}
In this section we assume $1/2<\alpha<1$.
Recall that the definition of the type of a vertex as well as that of its radius use distances on the hyperbolic plane. 
\begin{definition}
	For $G\in\Pnan$ or $G\in \mathcal{G}(N;\alpha,\nu)$, let $\core(G)=\{v \in V(G) \ : \ t_v\geq R/2\}$ be the \textit{core} of $G$.
	Furthermore, we let $\xcore (G) = \{ v \in V(G) \ : \ t_v\geq R/2 - \log \log R \}$ be the \textit{extended core} of $G$. Finally, 
	we call the set $\per (G) := \{ v \in V(G) \ : \ t_v \leq \log\log R\}$ the set of \textit{peripheral vertices} of $G$. 
\end{definition}
Note that for every pair of vertices $u,v\in \core(G)$, by the triangle inequality the distance between $u$ and $v$ is at most $R$, so $uv\in E(G)$. In other words, the subgraph that is induced by the vertices in $\core (G)$ is complete. 

\begin{lemma}\label{lem:almostCore}
	Let $\omega(N)$ be such that $\omega(N)\rightarrow\infty$ as $N\rightarrow\infty$ but $\omega(N)=o(R)$. 
Let $x$ be a vertex such that $t_x < \log \log R$ and $U \subset \D$ an open subset of $\D$ which does not contain any points of 
type at least $\log \log R$ and has $\mu_\alpha (U)= o (\mu_\alpha (\D))$. 
Let $G \in \mathcal{P}_{\{x\},U} (N;\alpha, \nu)$.
	A.a.s. there is a vertex $u\in\core(G)$ such that $uv\in E(G)$ for every vertex $v$ with $t_v\geq \frac{2\alpha-1}{2\alpha}R+\omega(N)$.
\end{lemma}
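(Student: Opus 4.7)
The plan is to exhibit a single vertex $u$ of extremely high type that lies in $\core(G)$ and is connected to every target $v$ purely by the triangle inequality. The key observation is that whenever $t_u + t_v \geq R$, one has $d(u,v) \leq r_u + r_v = 2R - (t_u + t_v) \leq R$, so $uv \in E(G)$ with no angular condition needed; in particular, this sidesteps Lemma~\ref{lem:relAngle} entirely.

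Concretely, I would set
\[
t^\star := \frac{R}{2\alpha} - \frac{\omega(N)}{2}.
\]
Two deterministic consequences are then immediate. First, for every $u$ with $t_u \geq t^\star$ and every $v$ with $t_v \geq \frac{2\alpha-1}{2\alpha}R + \omega(N)$, we have $t_u + t_v \geq R + \omega(N)/2$, so $u$ and $v$ are joined. Second, since $R/(2\alpha) - R/2 = R(1-\alpha)/(2\alpha)$ is linear in $R$ while $\omega(N) = o(R)$, we have $t^\star > R/2$ for $N$ large, so any such $u$ automatically belongs to $\core(G)$.

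It therefore remains to show that a.a.s.\ at least one vertex of type $\geq t^\star$ exists in $\mathcal{P}_{\{x\},U}(N;\alpha,\nu)$. Because $U$ contains no point of type $\geq \log\log R$ and $t^\star \gg \log\log R$, the region $A := \{p \in \D : t_p \geq t^\star\}$ lies entirely in $\D\setminus U$ and does not contain $x$ (whose type is below $\log\log R$). By Lemma~\ref{lem:dist}, whose hypothesis $t^\star < (1-\eps)R$ holds for suitably small $\eps$ because $\alpha > 1/2$, the expected number of Poisson points in $A$ satisfies
\[
(1+o(1))\, N \int_{t^\star}^{R} \alpha e^{-\alpha t}\, dt \;\sim\; N e^{-\alpha t^\star} \;=\; \nu\, e^{\alpha\omega(N)/2} \;\to\; \infty.
\]
Fact~\ref{fct:empty} then gives $\P(A \text{ empty}) = \exp(-\Theta(\nu e^{\alpha\omega(N)/2})) = o(1)$, completing the argument.

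There is no substantial obstacle; the proof is essentially an existence count. The only calibration is choosing $t^\star$ so that it simultaneously exceeds $R/2$ (putting the witness in the core) and falls sufficiently below $R/(2\alpha)$ that the Poisson mean diverges. Both requirements are easily met because $\omega(N)\to\infty$ while $\omega(N)=o(R)$ and $\alpha<1$, and the arithmetic $\tfrac{R}{2\alpha} + \tfrac{2\alpha-1}{2\alpha}R = R$ is precisely what makes the excess $\omega(N)/2$ on the type of $u$ combine with the excess $\omega(N)$ on the type of $v$ to strictly beat $R$.
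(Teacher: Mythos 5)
Your proposal is correct and follows essentially the same route as the paper: both arguments use the radial triangle inequality to show that any vertex of sufficiently small radius (your type threshold $t^\star = R/(2\alpha) - \omega(N)/2$ versus the paper's $R/(2\alpha) - \omega(N)$) is automatically adjacent to every target $v$ and automatically lies in the core, and both then apply Fact~\ref{fct:empty} to show the Poisson count in that central disc diverges in expectation, so it is nonempty a.a.s.
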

\begin{proof}
	By the triangle inequality, any such vertex $v$ is adjacent to any vertex of radius at most $R(2\alpha-1)/(2\alpha)+\omega(N)$, so it is sufficient to
	show that a.a.s. the disc $D_r$ of radius $r:=\frac{2\alpha-1}{2\alpha}R+\omega(N)$ is non-empty.
	Note that $r<R/2$, for any $N$ large enough, as $\omega(N)=o(R)$ and $\alpha <1$, so any vertex in $D_r$ belongs to the core.
	Let $N_r$ be the number of vertices in $D_r$.
	
	Note first that $\frac{2\alpha -1}{2\alpha} - 1 = -\frac{1}{2\alpha}$. Thus $r-R=-\frac{R}{2\alpha} + \omega (N)$, whereby 
	$\alpha (r-R) = -R/2 + \alpha \omega (N)$. 
	As $D_r \cap U = \emptyset$, these identities imply that
	\begin{equation*}
	\begin{split}
	\Ex{N_r}&= (N-1) \frac{\mu_\alpha (D_r)}{\mu_\alpha (\D ) - \mu_\alpha (U)}
=(N-1)\frac{\cosh(\alpha r)-1}{\cosh(\alpha R)(1-o(1))} \\ &\sim N e^{\alpha (r-R)} 
= N e^{-R/2 + \omega (N)} \stackrel{N=\nu e^{R/2}}{=} \nu e^{\alpha \omega (N)}.
	\end{split}
	\end{equation*}
	Using this and Fact~\ref{fct:empty} we get
	\begin{align*}	
		\P(N_r\neq0)&=
			1-e^{-(1+o(1))\nu e^{\alpha\omega(N)}}=1-o(1).
	\end{align*}
\end{proof}
In fact, the only component we consider is the one containing the vertices in the core.  
We show that most pairs of vertices that are connected have a short path into the core.
These paths naturally give short paths connecting all the vertices in the component.
We are interested in the following paths in which the type of the vertices increases exponentially along the path.

\begin{definition}
For $\delta>0$, we call a path $P=v_1,v_2,\dots,v_m$ in $G$ a $\delta$-\emph{exploding path}
if $v_m\in \core(G)$ and $t_{v_{i+1}}\geq (1+\delta)t_{v_i}$ for $1\leq i\leq m-2$.
\end{definition}

Note that an exploding path must have length $O(\log_{1+\delta} R)$. 
Since $R = O(\log N)$ too, it turns out that such a path is ultra-short, that is, it has length $O(\log \log N)$. 

Not every vertex in the giant component has an exploding path into the core.
However, the vertices that do not have such a path are more likely to have a very low type.
In particular, we prove that any vertex of type at least $\log\log R$ has an exploding path into the core with probability $1-o(1)$. 
We actually show this lemma for the Poisson model. The result does transfer to $\Gnan$, due to its monotonicity, but we are going 
to use it later in this form. 
\begin{lemma}\label{lem:explode}
Let $\delta=2\frac{1-\alpha}{2\alpha-1}$ and $\zeta < \delta$ be a positive real number. 
Assume that $v$ and $x$ are vertices such that $t_v \geq \log \log R\geq t_x$ and $U \subset \D$ an open subset 
which does not contain any points of type at least $t_v$ so that $U$ is contained in a sector of $\D$ that spans a $o(1)$ angle. 
With probability (in the space $\mathcal{P}_{\{v,x\},U}(N;\alpha,\nu)$) 
$1-e^{-\Theta ( \log^{(\alpha - \frac{1}{2}) \zeta} R)}$, there is a $(\delta -\zeta)$-exploding path starting at $v$.
\end{lemma}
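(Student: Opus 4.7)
The plan is to construct the $(\delta-\zeta)$-exploding path greedily, one vertex at a time, exploiting the Poisson independence in the model $\mathcal{P}_{\{v,x\},U}(N;\alpha,\nu)$. First I would fix a deterministic sequence of target types $t_1:=t_v$ and $t_{i+1}:=(1+\delta-\zeta)\,t_i$ for $i\ge 1$. Since $t_1\ge\log\log R$ and these grow geometrically, $t_k\ge R/2$ after only $k=O(\log R)$ steps, and any vertex of type $\ge R/2$ lies in $\core(G)$. If at each stage $i$ one can locate a neighbour $v_{i+1}$ of $v_i$ with type in the narrow annulus $[t_{i+1},\,t_{i+1}+1]$, then the path $v=v_1,v_2,\ldots,v_k$ is $(\delta-\zeta)$-exploding and terminates in the core.

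For the greedy step, at stage $i$ I would define the search region
\[ S_i \;:=\; T_\eps^-(v_i)\,\cap\,\{p\in\D : t_p\in[t_{i+1},\,t_{i+1}+1]\}, \]
so that every point of $S_i$ is adjacent to $v_i$ by Lemma~\ref{lem:relAngle}. I would then expose the Poisson process sequentially: at stage $i$, reveal the points inside $S_i$, pick any one as $v_{i+1}$ (failing if $S_i$ is empty), and proceed. The critical observation is that the annular strips $[t_{i+1},t_{i+1}+1]$ are pairwise disjoint, because the geometric growth of $t_i$ gives $t_{i+1}-t_i\ge(\delta-\zeta)\,t_i\gg 1$; hence $S_1,S_2,\dots$ are pairwise disjoint regardless of the angular positions chosen, and by the spatial independence of the Poisson process the counts $|S_i|$ are independent. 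Rotational symmetry of the intensity together with the fact that $S_i\cap U=\emptyset$ (since $U$ contains no points of type at least $t_v\le t_i$) makes each mean $\mu_i:=\Ex{|S_i|}$ a function only of $t_i$.

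A direct calculation using Lemma~\ref{lem:dist} for the radial density and the definition of $\check\theta$ yields
\[ \mu_i \;\ge\; c\,N\int_{t_{i+1}}^{t_{i+1}+1} e^{-\alpha s}\cdot\frac{\nu}{N}\,e^{(t_i+s)/2}\,ds \;\ge\; c'\,e^{\,t_i/2 - (\alpha-1/2)t_{i+1}}. \]
The definition $\delta=2(1-\alpha)/(2\alpha-1)$ is precisely what forces the identity $(\alpha-\tfrac12)(1+\delta)=\tfrac12$, so that $t_i/2-(\alpha-\tfrac12)t_{i+1}=(\alpha-\tfrac12)\,\zeta\,t_i$. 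Hence $\mu_i\ge c'\,e^{(\alpha-1/2)\zeta\, t_i}$; in particular $\mu_1\ge c'(\log R)^{(\alpha-1/2)\zeta}$, and the later $\mu_i$ are astronomically larger. A union bound over the $k=O(\log R)$ stages then gives total failure probability at most $\sum_i e^{-\mu_i}= e^{-\Theta(\log^{(\alpha-1/2)\zeta}R)}$, matching the statement.

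The hard part is justifying independence of the Poisson counts across a sequence of stages whose geometry is itself random. The annular-separation argument above is what delivers this and, together with the rotational symmetry that makes $\mu_i$ deterministic, avoids any delicate conditioning on the angular positions of the $v_i$. A minor technical check is that $\check\theta_{v_i,\cdot}$ has not saturated at $\pi$ during the construction; but saturation can only increase $\mu_i$, and it ceases to be an issue once $t_i\ge R/2$, at which point the core has been reached and the path is complete.
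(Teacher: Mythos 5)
Your approach is sound and genuinely different from the paper's, and in one respect it is cleaner. The paper also builds the exploding path greedily, but it chooses the next vertex in a region $\hat{T}_\eps^-(v_i)$ whose type-interval $((1+\delta-\zeta)t_{v_i},(1+\delta+\zeta)t_{v_i})$ has width proportional to $t_{v_i}$, so consecutive annuli can overlap in type; disjointness of the explored regions is then secured \emph{angularly}, by forcing each step to advance a minimum relative angle in a fixed (anticlockwise) direction and proving a separate geometric claim (Claim~\ref{clm:reg_disjoint}) that the new region falls strictly beyond $T_\eps^+(v_i)$. Your device of placing the search sets in pairwise-disjoint type-annuli of width~$1$, separated by geometrically growing gaps, makes the spatial disjointness immediate and lets you invoke Poisson independence with no geometric work, while your computation of $\Ex{|S_i|}$ and the union bound recover the same exponent $(\alpha-\tfrac12)\zeta$ because the integral $\int e^{(1/2-\alpha)s}\,ds$ is dominated by its lower endpoint in both cases. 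This is a genuine simplification of the bookkeeping.

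There are two gaps you should close. First, the deterministic targets $t_{i+1}=(1+\delta-\zeta)t_i$ do not actually certify a $(\delta-\zeta)$-exploding path: after the first step you only know $t_{v_i}\in[t_i,t_i+1]$, so $t_{v_{i+1}}\ge t_{i+1}=(1+\delta-\zeta)t_i$ can fail to give $t_{v_{i+1}}\ge(1+\delta-\zeta)t_{v_i}$ when $t_{v_i}>t_i$ (the deficit can be as large as $1+\delta-\zeta$). The easy fix is to take the annulus $[(1+\delta-\zeta)t_{v_i},(1+\delta-\zeta)t_{v_i}+1]$ adapted to the actual type of $v_i$ (these annuli remain pairwise disjoint because the gap $(\delta-\zeta)t_{v_i}\ge(\delta-\zeta)\log\log R\to\infty$, and they are deterministic given $\mathcal F_i$, so the independence argument survives), or equivalently to shift each deterministic annulus up by $1+\delta-\zeta$. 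Second, your claim that the issue with $\check\theta$ ``ceases to be an issue once $t_i\ge R/2$'' misplaces the threshold. Lemma~\ref{lem:relAngle} requires $t_u+t_v<R-c_0$, and since you aim for $t_p\approx(1+\delta-\zeta)t_{v_i}$, the constraint $t_{v_i}+t_p<R$ forces $t_{v_i}\lesssim R/(2+\delta)=\frac{2\alpha-1}{2\alpha}R$, which is strictly below $R/2$. So the inductive construction must terminate around type $\frac{2\alpha-1}{2\alpha}R$, not $R/2$, and you need a separate argument (as the paper does, via Lemma~\ref{lem:almostCore}) to bridge the last leg into $\core(G)$: a.a.s.\ there is a vertex adjacent to everything of type at least $\frac{2\alpha-1}{2\alpha}R+\omega(N)$. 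With these two repairs your proof is correct.
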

\begin{proof}
	Take any $\eps<\frac{1}{4}$ and assume that $N>N_0$, where $N_0$ is as in Lemma~\ref{lem:relAngle}.
	
	By Lemma~\ref{lem:almostCore}, if $v$ satisfies $t_v\geq \frac{2\alpha -1}{2\alpha}R + \omega (N)$,
	then a.a.s. there is a vertex $u\in G$ with $t_{u}\geq R/2$ and $vu\in E(G)$.
	In other words, if $t_v\geq \frac{2\alpha -1}{2\alpha}R + \omega (N)$, then we are done.
	

    Assume now that $t_v < \frac{2\alpha -1}{2\alpha}R + \omega (N)$. As $1+\delta = \frac{1}{2\alpha -1}$, it follows that 
    $(1+\delta) t_v< \frac{1}{2\alpha} R + \frac{\omega (N)}{2\alpha -1}$. Note that by Corollary~\ref{lem:innerEmpty},
    it suffices to consider only points of type no larger than $\frac{1}{2\alpha} R + \frac{\omega (N)}{2\alpha -1}$.
    
    Let $v_1=v$. We will construct inductively a series of (random) sets $T_i \subset \D$, for $i\geq 2$, in each of which we find a
 vertex $v_i$, which will be the $i$th vertex in the exploding path.

		For two points $p,p'$, let 
		$\vartheta_{p,p'}= \theta_{p,p'}$ if $p'$ is in the anti-clockwise direction from $p$, but $\vartheta_{p,p'}= -\theta_{p,p'}$, otherwise.
    
    Assume that we have exposed $v_i$. For any point $p \in \D$ we let
    
    $$ \hat{T}_\eps^- (p) := \left\{ p' \in \D \ : \ |t_{p'} - (1+\delta )t_{p}| < \zeta t_p, \ \frac{\eps \nu}{N} e^{\frac{t_p + t_{p'}}{2}} \leq \vartheta_{p',p} \leq \frac{2(1-\eps) \nu}{N} e^{\frac{t_{p'} + t_p}{2}}\right\}.$$
    We take $T_i := \hat{T}_{\eps}^- (v_i)$.
	Let $A$ be the set of vertices that are located in $\hat{T}_{\eps}^- (v_i)$.
	Note that, as the angle covered by $U$ is $o(1)$, we have that $\mu_\alpha (U) = o (\mu_\alpha (\D))$.
	Hence, the area of a set in $\D \setminus U$ is within a $1-o(1)$ factor from the area in $\D$ (both on the hyperbolic 
plane of curvature $-\alpha^2$). 
	
	So, for any $\eps \in (0,1/4)$ and for $N$ large enough we have
	
	\begin{align*}
		\Ex{|A|}&\geq 2(1-\frac32\eps ) \frac{N-2}{2\pi}\int_{(1+\delta -\zeta)t_{v_i}}^{(1+\delta + \zeta)t_{v_i}}  e^{\frac{1}{2}(t_{v_i}+t-R)}
		(1-o(1))e^{-\alpha t}dt\\
			&\geq 2(1-\frac32\eps )(1-o(1)) \frac{N}{2\pi}\frac{\nu}{N}e^{\frac{t_{v_i}}{2}}
				\int_{(1+\delta -\zeta)t_{v_i}}^{(1+\delta + \zeta)t_{v_i}}e^{(\frac12-\alpha)t}dt\\
			&\stackrel{\eps<1/4}{\geq}\frac{\nu}{2\pi}e^{\frac{1}{2}t_{v_i}}\frac{1}{2\alpha-1}
				\left(e^{(\frac12-\alpha)(1+\delta -\zeta )t_{v_i}}-e^{(\frac12-\alpha) (1+\delta + \zeta)t_{v_i}}\right).
	\end{align*}
	But $(1+\delta + \zeta)t_{v_i} - (1+\delta)t_{v_i} + \zeta t_{v_i}> 2\zeta t_{v_i} \rightarrow \infty$, whereby the above becomes:
	\begin{align*}
	\Ex{|A|}&\geq \frac{\nu}{2\pi}\frac{1}{2\alpha-1}e^{\frac{1}{2}t_{v_i}- (\alpha -\frac12 )(1+\delta -\zeta)t_{v_i} }(1-o(1)). 
	\end{align*}
Furthermore, $(\alpha - \frac12)(1+\delta)=\frac{2\alpha-1}{2}\frac{1}{2\alpha-1}=\frac{1}{2}$ and finally, we have  
$$\Ex{|A|}\geq \frac{\nu}{2\pi}\frac{1}{2\alpha-1}e^{(\alpha - \frac12)\zeta t_{v_i}}(1-o(1)) \stackrel{2\alpha -1 < 1}{\geq}
\frac{\nu}{2\pi} e^{(\alpha - \frac12)\zeta t_{v_i}}, $$
for $N$ large enough. 
Hence, by Fact~\ref{fct:empty} we have 
	\begin{align*}
		\P(|A|>0) &=1-\P(|A|=0)\\
		&\geq1-\exp\left(-\frac{\nu}{2\pi} e^{(\alpha - \frac12) \zeta t_{v_i}}\right).
	\end{align*}
	As $t_{v_i}\geq\log\log R$, we have $\P(|A|=0) \leq\exp\left(-\frac{\nu}{\pi}(\log R)^{(\alpha - \frac12) \zeta}\right)$.
	If $|A|>0$, then there are vertices that are located inside $T_i$ and we let $v_{i+1}$ be one of them -- the choice is arbitrary. 
    The following claim guarantees that $T_{i+1}=\hat{T}_\eps^- (v_{i+1})$ is disjoint from $T_i$ and when we repeat the 
    argument there is no danger to expose again area which we have already exposed. 
    \begin{claim} \label{clm:reg_disjoint} 
For all $N$ large enough and for all $i\geq 1$ the following holds.
For all $p \in \hat{T}_{\eps}^- (v_i)$ we have 
$T_\eps^+ (v_i) \cap \hat{T}_{\eps}^-(p) = \emptyset$. \end{claim}
 \begin{proof}[Proof of Claim~\ref{clm:reg_disjoint}]
 Consider a point $p \in \hat{T}_{\eps}^- (v_i)$ and let $p' \in \hat{T}_\eps^- (p)$. 
We will show that 
$$\vartheta_{p',v_i} \gg 2(1+\eps ) \frac{\nu}{N} e^{\frac{t_v + t_{p'}}{2}}. $$
We write $\vartheta_{p',v_i} = \vartheta_{p',p} + \vartheta_{p,v_i}$. 
Since $p' \in \hat{T}_\eps^- (p)$ and $p \in \hat{T}_{\eps}^- (v_i)$ we have 
$$ \vartheta_{p',p} \geq \eps \frac{\nu}{N} e^{\frac{t_{p'} + t_p}{2}} \ \mbox{and} \ \vartheta_{p, v_i} 
\geq \eps \frac{\nu}{N} e^{\frac{t_p + t_{v_i}}{2}}.$$
Hence 
\begin{equation*}
\begin{split} 
\vartheta_{p',p} + \vartheta_{p,v_i} & \geq  \eps \frac{\nu}{N} \left( e^{\frac{t_{p'} + t_p}{2}}  + e^{\frac{t_p + t_{v_i}}{2}} \right) \\
& =\eps \frac{\nu}{N} e^{\frac{t_{p'} + t_{v_i}}{2}} \left( e^{\frac{t_p - t_{v_i}}{2}} + e^{\frac{t_p - t_{p'}}{2}}\right) > 
\eps \frac{\nu}{N} e^{\frac{t_{p'} + t_{v_i}}{2}}  e^{\frac{t_p - t_{v_i}}{2}} \\
&\geq \eps \frac{\nu}{N} e^{\frac{t_{p'} + t_{v_i}}{2}} e^{(\delta -\zeta)t_{v_i}} \stackrel{(\delta -\zeta)t_{v_i} \rightarrow \infty}{\gg}
2(1+\eps ) \frac{\nu}{N} e^{\frac{t_{p'} + t_{v_i}}{2}}.
\end{split}
\end{equation*}
In fact, $(\delta -\zeta)t_{v_i} \geq (\delta -\zeta)\log \log R$, and therefore the inequality holds uniformly for all $N$ that are large enough. 
 \end{proof}

	If we start at type at least $\log \log R$, it takes $O(\log R )$ steps to reach type $\frac{2\alpha -1}{2 \alpha}R+\omega (N)$;
	at that point we can complete the exploding path using the vertex whose existence is guaranteed by Lemma~\ref{lem:almostCore}.
	Thus for any given vertex $v$ with $t_v>\log\log R$ we have
	\begin{align*}
		\P(\exists\text{ sequence of vertices $v_2,\ldots$})&=\left(1-\exp\left(-\frac{\nu}{\pi}(\log R)^{(\alpha - \frac12) \zeta}\right)\right)^{O(\log R)}\\
			&=1-O(\log R )\exp\left(-\frac{\nu}{\pi}(\log R)^{(\alpha - \frac12) \zeta}\right)\\
			&=1- \exp\left(-\Theta \left( \log^{(\alpha - \frac12 ) \zeta} R \right) \right),
	\end{align*}
	as $xe^{-ax^b}=o(1)$ for $0<a,b$ and $x \rightarrow \infty$.
	
\end{proof}

\begin{remark}\label{rem:alwaysExplode}
In fact, if the type of $v$ is $O(1)$, that is, $v$ is a typical vertex, then the probability that there is a $(\delta - \zeta)$-exploding path 
starting at $v$ is bounded away from $0$. With slightly more work, one can show that two vertices $u$ and $v$ have both an exploding 
path with probability that is asymptotically bounded away from $0$. Thus, $d_G (u,v)<\infty$ with probability that is asymptotically bounded
away from $0$. Alternatively, this follows from the main theorem in~\cite{bode_fountoulakis_mueller}, according to which $\Gnan$ has 
giant component a.a.s. if $1/2 < \alpha < 1$. 
\end{remark}

Given two vertices $u$ and $v$ that do not belong to $\core(G)$, if there are $(\delta - \zeta)$-exploding paths starting at $u$ and $v$, respectively, then $d_(G) (u,v) < \left(\frac{2}{\log (1+\delta -\zeta)} \log R\right) +1$. 
In particular, with $\delta = 2\frac{1-\alpha}{2\alpha-1}$ (as in the above lemma), we have $1+\delta = \frac{1}{2\alpha -1}$ and thereby $\frac{2}{\log (1+\delta )} = 2 \tau$. 
Hence, to deduce the upper bound in Theorem~\ref{thm:ultrasmall}, it suffices to show that the probability that $u$ and $v$ belong to the same connected component, but either $u$ or $v$ do not belong to a $(\delta -\zeta)$-exploding path 
is $o(1)$. We are now ready to proceed with the details. 
\begin{proof}[Proof of Theorem~\ref{thm:ultrasmall}: upper bound] 
Let $u,v$ be two arbitrary distinct vertices in $\V$. We will show that the event $d_G (u,v) < \infty$ but $d_G(u,v) \geq (2\tau + \zeta) \log R$ occurs with 
probability $o(1)$. Note that this is in the $\Gnan$ space. 
We denote this event by $\mathcal{E}_{N}(\tau,\zeta)$. 
Also, for some fixed $\eps \in (0,1)$, let $\mathcal{A}_N$ denote the event that the relative angle between $u$ and $v$ is 
greater than $\nu \frac{2\zeta_\eps \log R}{N}$,  where $\zeta_\eps := \zeta^2 (1-\eps)$.  
Since the angles of the points $u$ and $v$ are independent and uniformly distributed, the probability of $\overline{\mathcal{A}_N}$ is $o(1)$ and therefore it suffices to prove that 
$\Pro {\mathcal{E}_N (\tau,\zeta) \cap \mathcal{A}_N} = o(1)$.

If $\mathcal{E}_{N}(\tau,\zeta)$ is realised, then there must be a \emph{minimal} path between vertices $u$ and $v$. 
In this context, a minimal path is meant to be an induced path. Let $P_{\min}$ denote such a path. 
Assume, in addition, that $\mathcal{A}_N$ is simultaneously realised, that is, $\theta_{u,v} > \nu \frac{2\zeta_\eps \log R}{N}$. 
With this assumption, 
let $P_{\min}(u)$ denote the sub-path of $P_{\min}$ starting at $u$ and ending at the first vertex whose relative angle with 
$u$ exceeds $\nu \frac{\zeta_\eps \log R}{N}$. 
Similarly, let $P_{\min}(v)$ denote the sub-path of $P_{\min}$ starting at $v$ and ending at the first vertex whose relative angle
with $v$ exceeds $\nu \frac{\zeta_\eps \log R}{N}$. Clearly, as $\mathcal{A}_N$ is realized, the two paths may overlap, but they 
have at most one edge in common.  

Assume without loss of generality that $v$ is at angle $\theta_{u,v}\leq \pi$ in the anti-clockwise direction from $u$.
Consider the sectors consisting of points of relative angle at most $\nu \frac{\zeta_\eps \log R}{N}$ from a point $x$:
$$S_h^+(x):= \left\{ p \in  \D \ : \ 
t_p > \log \log R, \ 0<\vartheta_{x,p} < \nu \frac{\zeta_\eps \log R}{N} \right\}$$ 
and 
$$ S_h^- (x) := \left\{ p \in  \D \ : \ 
t_p > \log \log R, \  -\nu \frac{\zeta_\eps \log R}{N} < \vartheta_{x,p} <0 \right\}\text{.}$$
 
There are two cases:
\begin{enumerate}
\item[1.] either each  one of $S_h^+(u), S_h^-(u), S_h^+(v), S_h^-(v)$ contains a vertex that is the starting vertex of a $(\delta- \zeta^2)$-exploding path,
\item[2.] or at least one of them is either empty or none of its vertices is the endpoint of a $(\delta-\zeta^2)$-exploding path. 
\end{enumerate}
Let $\mathcal{S}$ denote the former and let $\overline{\mathcal{S}}$ denote the latter. 

	\begin{figure}[h]
\centering
\includegraphics[width=0.5\columnwidth]{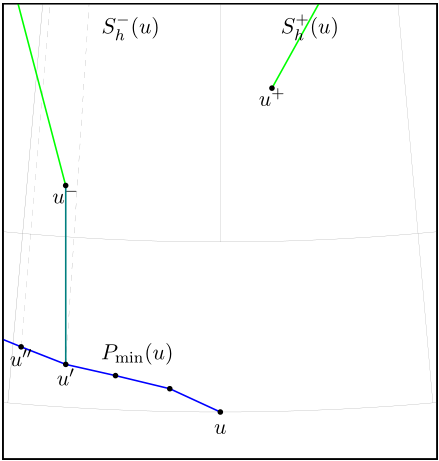}
 	\caption{Creating a short path into the core under $\mathcal{S}$.}\label{fig:short}
 \end{figure}

We will show that $\Pr (\overline{\mathcal{S}}) = o(1)$. 
First consider, without loss of generality, the set $S_h^+ (u)$. 
The probability that this set is empty is $o(1)$.  
Indeed, let $N_{S_h^+(u)}$ be the number of vertices that appear into this sector. 
Then
$$ \Ex {N_{S_h^+(u)}} = N \frac{\cosh (\alpha (R-\log \log R))-1}{\cosh(\alpha R)-1}
~\frac{1}{2\pi}~\nu\frac{\zeta_\eps \log R}{N} \approx \log^{1-\alpha} R \rightarrow \infty. 
$$
The distribution of $N_{S_h^+(u)}$ is binomial and the application of a standard Chernoff bound implies that 
$\Pro { N_{S_h^+ (u)}=0} = o(1)$. 

If $S_h^+(u)$ is not empty and none of its vertices is the beginning of a $(\delta -\zeta^2)$- exploding path, then the vertex with 
lowest type in $S_h^+(u)$ does not have a $(\delta-\zeta^2)$-exploding path starting at it as well. We call this vertex the \emph{first} 
vertex in $S_h^+(u)$. 
\begin{claim} \label{clm:first_vertex}
The  probability that the first vertex in $S_h^+(u)$ does not have a $(\delta-\zeta^2)$-exploding path starting at it is o(1). 
\end{claim}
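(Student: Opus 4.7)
The plan is to deduce the claim from Lemma~\ref{lem:explode} applied to $w$, the lowest-type vertex of $S_h^+(u)$. The expected-count calculation performed just before the claim shows $S_h^+(u)$ is non-empty with probability $1-o(1)$, so $w$ is well-defined on that event, and by definition of $S_h^+(u)$ it satisfies $t_w \geq \log\log R$. I would also condition on the a.a.s.\ event $t_u \leq \log\log R$, which holds since a typical vertex has type $O(1)$ by Lemma~\ref{lem:dist}; the finitely many exceptional cases can be discarded into the $o(1)$ error.

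The next step is to recast the probability space so that the hypotheses of Lemma~\ref{lem:explode} are met. Declaring $w$ to be the first vertex of $S_h^+(u)$ exposes the position of $w$ together with the emptiness of
\[ U \;:=\; \{p \in S_h^+(u) \,:\, t_p < t_w\}. \]
By the Palm formula for Poisson processes and independence on disjoint regions, the remainder of the configuration is then distributed as $\mathcal{P}_{\{u,w\},U}(N;\alpha,\nu)$. The set $U$ is open, contains no point of type $\geq t_w$, and lies inside the sector $S_h^+(u)$ whose angular extent is $\nu\frac{\zeta_\eps \log R}{N}=o(1)$; so all the hypotheses of Lemma~\ref{lem:explode} concerning $U$ are verified.

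Applying Lemma~\ref{lem:explode} with $v=w$ and $x=u$ (using $t_w \geq \log\log R \geq t_u$), the conditional probability that $w$ admits no $(\delta-\zeta)$-exploding path into the core is at most $\exp\bigl(-\Theta(\log^{(\alpha-1/2)\zeta} R)\bigr)=o(1)$. Since this bound is uniform over the permitted ranges of $t_w$ and $t_u$, averaging over the joint distribution of these types preserves it and yields the claim.

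The main obstacle I anticipate is the bookkeeping in the Palm-type conditioning: I must confirm that after revealing that $w$ is the lowest-type vertex in $S_h^+(u)$, the tubes $\hat{T}_\eps^-(v_i)$ used inside the proof of Lemma~\ref{lem:explode} to build the exploding path still carry essentially their full Poisson mass. This holds because $U$ sits in a sector of $o(1)$-angle, whereas the subsequent tubes either lie outside this sector or overlap it only in a sub-region of comparatively negligible hyperbolic area; the resulting loss is absorbed into the $(1\pm o(1))$-factor already present in the estimates of Lemma~\ref{lem:explode}.
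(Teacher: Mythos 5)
Your proposal follows the paper's own proof of Claim~3.8 almost line for line: you identify the first (lowest-type) vertex of $S_h^+(u)$, observe that exposing it reveals only the emptiness of the lower-type sub-region $U$ of $S_h^+(u)$, note that $U$ satisfies the hypotheses of Lemma~\ref{lem:explode} (no points of type $\geq t_{u'}$, contained in a sector of $o(1)$ angle), and then apply that lemma with $v=u'$, $x=u$. The last paragraph of your proposal, worrying about whether the tubes $\hat{T}^-_\eps(v_i)$ retain their mass after conditioning on $U$, is precisely what the $o(1)$-angle hypothesis on $U$ in Lemma~\ref{lem:explode} was designed to absorb, so that concern is already handled by the lemma and does not need to be re-argued here.

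There is one small but genuine gap: your Palm-formula step produces a bound in the Poisson space $\mathcal{P}_{\{u,u'\},U}(N;\alpha,\nu)$, but the claim is stated in $\Gnan$ (the upper-bound proof explicitly notes this). In $\Gnan$, after exposing $u'$ and the emptiness of $U$, the remaining configuration is distributed as $\mathcal{G}_{\{u,u'\},U}(N;\alpha,\nu)$, not as the Poisson process. You therefore still need to transfer the $o(1)$ failure bound back to $\Gnan$; the paper does this by appealing to Lemma~\ref{lem:poisson_cond} and the observation that ``there exists a $(\delta-\zeta)$-exploding path from $u'$'' is a non-decreasing property. Adding that one sentence closes the argument.
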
 
\begin{proof}[Proof of Claim~\ref{clm:first_vertex}]  
Conditional on having at least one vertex in $S_h^+ (u)$, let $u'$ be the first vertex (with probability 1 there will be exactly one 
such vertex) 
which we expose and assume that the area in $S_h^+(u)$ that consists of points with type greater than $t_{u'}$ has not been exposed. 
Let us switch temporarily to $\mathcal{P}_{X, U} (N;\alpha,\nu)$, where $X=\{u,u'\}$ and $U$ the subset of $S_h^+(u)$ below $u'$.
Then taking $\zeta$ to be $\zeta^2$ in Lemma~\ref{lem:explode}, we deduce that  
there is a $(\delta- \zeta^2)$-exploding path starting at $u'$ with probability $1-o(1)$ uniformly over 
$t_{u'} \geq \log \log R$. This lemma can be applied as the area above $u'$ has not been exposed in the corresponding Poisson process
and the proof of Lemma~\ref{lem:explode} deals only only with that area.
The result transfers to $\Gnan$ (conditional on $U$ being empty and on the realisations of $u$ and $u'$), 
through Lemma~\ref{lem:poisson_cond}, due to the fact that this property is non-decreasing.  
\end{proof}
Then, since the probability that $S_h^+(u)$ is empty is $o(1)$, the union bound implies that $\Pro {\overline{\mathcal{S}}} = o(1)$. 

We will show that $\Pro {\mathcal{E}_N (\tau,\zeta) \cap \mathcal{A}_N \cap \mathcal{S}} = 0$. 
Observe that any vertex which belongs to $S_h^+(u) \cup S_h^-(u)$ (or to $S_h^+(v) \cup S_h^-(v)$, respectively) will be adjacent to a
vertex in $P_{\min}(u)$ ($P_{\min} (v)$, resp.). Indeed, if $P_{\min} (u)$ contains a vertex in $S_h^+(u) \cup S_h^-(u)$, then 
this must be adjacent to any other vertex in $S_h^+(u) \cup S_h^-(u)$. This is the case as $S_h^+(u) \cup S_h^-(u) \subseteq 
T_\eps^- (u')$ for any $u' \in S_h^+(u) \cup S_h^-(u)$, provided that $\zeta <1$. To see this, note that any two points 
in $S_h^+(u) \cup S_h^-(u)$ have relative angle at most $2 \zeta_{\eps} \frac{\nu}{N}$. However, for any point in 
$S_h^+(u) \cup S_h^-(u)$, its inner tube consists of all points of relative angle at most $2(1-\eps )\frac{\nu e^{\log \log R}}{N}$ from 
it. Thus, if $\zeta_{\eps} < 1-\eps$ (that is, $\zeta < 1$), then the containment follows. 
In this case, some vertex of $P_{\min}(u)$ will be connected to the first vertex in $S_h^+(u) \cup S_h^-(u)$. 

Suppose now that all vertices of $P_{\min}(u)$ do not belong to $S_h^+(u) \cup S_h^-(u)$. Let $u^+, u^-$ be vertices in 
$S_h^+(u)$ and $S_h^-(u)$ respectively, which are the starting vertices of $(\delta-\zeta^2)$-exploding paths $P_{u^+}$ and $P_{u^-}$. 
There are two \emph{consecutive} vertices in $P_{\min}  (u)$ say $u',u''$ such that 
either $\vartheta_{u'',u^+}> 0 > \vartheta_{u',u^+}$ or $\vartheta_{u'',u^-}> 0 > \vartheta_{u',u^-}$. Thus, 
either $u^+$ or $u^-$ has type larger than $t_{u'}$ or $t_{u''}$ and therefore by Fact~\ref{fact:triangle} either $u^+$ or $u^-$ is adjacent to 
at least one of $u'$ or $u''$. 
The length of any exploding path is at most $\log R/ \log (1+\delta - \zeta^2)$. Thus, 
$|P_{u^+}|, |P_{u^-}| \leq  \log R/ \log (1+\delta - \zeta^2)$. 
 The following bounds the length of $P_{\min}(u), P_{\min} (v)$:
\begin{claim} \label{clm:minpathlength}
Both $P_{\min} (u)$ and $P_{\min} (v)$ have length at most $\zeta^2 \log R$. 
\end{claim}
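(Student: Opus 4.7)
I argue by contradiction, assuming $|P_{\min}(u)|>\zeta\log R$ (the analysis for $|P_{\min}(v)|$ is symmetric) and constructing an alternative $u$--$v$ path strictly shorter than $P_{\min}$, contradicting its minimality. Under $\mathcal{S}$, the sectors $S_h^+(u)$ and $S_h^-(u)$ each contain a vertex---call them $u^+$ and $u^-$---starting a $(\delta-\zeta)$-exploding path of length at most $\log R/\log(1+\delta-\zeta)=\tau\log R\,(1+O(\zeta))$ into the core, by Lemma~\ref{lem:explode}. By the preceding paragraph of the main proof, at least one of $u^+,u^-$, call it $u^{*}$, lies above some edge $u'u''$ of $P_{\min}(u)$, so by Fact~\ref{fact:triangle} $u^{*}$ is adjacent to both endpoints; let $v^{*}$ and the pair $v',v''$ be the analogous objects around $v$. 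Since $\core$ is a clique, we obtain the alternative path
\[
u \leadsto u' \to u^{*} \leadsto \core \leadsto v^{*} \to v' \leadsto v,
\]
of length at most $|P_{\min}(u)|+|P_{\min}(v)|+2\tau\log R\,(1+O(\zeta))+3$.

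Combining this with the lower bound $|P_{\min}|\geq(2\tau+\sqrt{\zeta})\log R$ coming from $\mathcal{E}_N(\tau,\zeta)$ and the minimality of $P_{\min}$ first yields the joint bound $|P_{\min}(u)|+|P_{\min}(v)|\geq(\sqrt{\zeta}-O(\zeta))\log R$. To upgrade this to the per-side bound asserted in the claim, I rerun the shortcut argument with the endpoint $w_K$ of $P_{\min}(u)$ playing the role of $v$: since $P_{\min}$ is induced, its prefix $P_{\min}(u)$ is a shortest $u$--$w_K$ path, so any alternative route of length $\leq \zeta\log R$ contradicts the standing assumption $|P_{\min}(u)|>\zeta\log R$. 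To feed this alternative, I exploit the induced property of $P_{\min}$ together with Lemma~\ref{lem:relAngle} on every non-consecutive pair $w_i,w_{i+2}$ in $P_{\min}(u)$: both vertices lie within angular distance $2\nu\zeta_\eps\log R/N$ of each other, forcing either $t_{w_i}+t_{w_{i+2}}\leq 2\log\log R+O(1)$ (which caps types at alternating positions along the path) or $t_{w_i}+t_{w_{i+2}}\geq R-c_0$ (which places one vertex so deep in the graph that the core-shortcut via $u^{*}$ applies directly).

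\textbf{Main obstacle.} The delicate step is to localize the pair $u',u''$ within $P_{\min}(u)$ sharply enough that the shortcut is strictly shorter than the segment of $P_{\min}$ it is meant to replace. The angular coordinate provides the right handle: since both $S_h^\pm(u)$ and $P_{\min}(u)$ span angular width $\nu\zeta_\eps\log R/N$, Fact~\ref{fact:triangle} pins $u^{*}$ to a restricted range of edges; combined with the alternating-type constraints above, this lets one prune a genuinely long segment of $P_{\min}(u)$. Converting these geometric observations into the clean per-side bound $\zeta\log R$---as opposed to the weaker $\sqrt{\zeta}\log R$---is where the real technical work lies.
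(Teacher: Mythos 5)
Your approach misses the key idea the paper uses, and the argument as written does not close. The paper's proof is a short, direct angular-packing argument: since $P_{\min}$ is an induced path, no two vertices of $P_{\min}(u)$ at even distance from $u$ can be adjacent. For each such vertex $u'$ one considers the sector $T(u') := \{p \in \D : \theta_{u',p} < (1-\eps)\nu/N\}$; if two of these sectors overlapped, the corresponding vertices would have relative angle at most $2(1-\eps)\nu/N$ and hence be adjacent by Lemma~\ref{lem:relAngle}, a contradiction. Since $P_{\min}(u)$ lives in a sector of angle $\nu\zeta_\eps\log R/N$, there is room for only $\tfrac{\zeta}{2}\log R$ pairwise-disjoint sectors of width $2(1-\eps)\nu/N$, so there are at most that many even-distance vertices and hence $|P_{\min}(u)| \leq \zeta\log R$. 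No contradiction or re-use of the exploding-path machinery is needed.

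By contrast, your proposed contradiction argument runs in the wrong direction. When you compare the alternative path $u \leadsto u' \to u^* \leadsto \core \leadsto v^* \to v' \leadsto v$ with $P_{\min}$, minimality of $P_{\min}$ together with the event $\mathcal{E}_N(\tau,\zeta)$ (which gives $|P_{\min}| \ge (2\tau+\zeta^{1/2})\log R$) yields a \emph{lower} bound $|P_{\min}(u)|+|P_{\min}(v)| \ge (\sqrt{\zeta}-O(\zeta))\log R$, which is information about the opposite inequality to the one you need. Your ``upgrade'' step also rests on the assertion that the prefix $P_{\min}(u)$ is a shortest $u$--$w_K$ path; an induced path need not be shortest, and a prefix of a minimal path is not automatically a shortest path between its endpoints, so this step is unjustified. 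You do correctly observe (in your second paragraph) the case split on $t_{w_i}+t_{w_{i+2}}$ relative to $R-c_0$, which addresses a genuine subtlety in applying Lemma~\ref{lem:relAngle} to high-type pairs, but you never turn these observations into the claimed bound, and you acknowledge as much in your closing sentence. The missing ingredient is precisely the disjoint-sector packing count along $P_{\min}(u)$; once you have that, the claim follows in two lines without any need to route through the core.
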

\begin{proof}[Proof of Claim~\ref{clm:minpathlength}]
Consider $P_{\min}(u)$ (the proof for $P_{\min}(v)$ is identical). Since $P_{\min}(u)$ is part of a minimal path, it follows that if 
we take the set of vertices of $P_{\min}$ that are at even distance from $u$, then there cannot be an edge between any two of 
them, for this would contradict the minimality of $P_{\min}$.  Let $P^{e}_{\min}(u)$ be this set of vertices. 
For any vertex $u'\in P^{e}_{\min}(u)$ consider the sector 
$T(u'):= \{ p \in \D \ : \ \theta_{u',p} < (1-\eps) {\nu \over N} \}$.  There cannot be distinct $u',u'' \in P^{e}_{\min}(u)$ such that 
$T(u') \cap T(u'') \not = \emptyset$. If this were the case, then their relative angle would be at most $2(1-\eps ){\nu \over N}$
and by Lemma~\ref{lem:relAngle} they would be adjacent.  
But there are at most $\nu \frac{\zeta_\eps \log R}{N}/ \left( 2 (1-\eps) {\nu \over N} \right)= \frac{\zeta^2}{2} \log R$ such sectors 
inside the sector of angle $\nu \frac{\zeta_\eps \log R}{N}$ in the anti-clockwise direction from $u$.  
Thus $|P^{e}_{\min}(u)| \leq \frac{\zeta^2}{2} \log R$, whereby the length of $P_{\min}$ is at most 
$\zeta^2 \log R$.
 \end{proof}
Thus 
\begin{equation*}
\begin{split}
d_G (u,v) &\leq |P_{\min}(u)| + |P_{u^+}|  +1 + |P_{u^-}| + |P_{\min} (v)|\\ 
& \leq 2\left( \frac{1}{\log (1+\delta - \zeta^2)} + \zeta^2 +o(1) \right) \log R 
\end{split}
\end{equation*}
Hence, there exists a $\zeta$ such that for all $N$ large enough 
$ \frac{1}{\log (1+\delta - \zeta^2)} + \zeta^2 +o(1) < \tau + \zeta$. This implies that $\mathcal{E}_N(\tau,\zeta)$ is not 
realised. 
\end{proof}

\begin{remark}
If we replace the angles that determine the domains $S_h^+$ and $S_h^-$ by a quantity that is proportional to 
$R^{\frac{1}{1-\alpha}}/N$ and the lower bound on the type by $\frac{1}{2(1-\alpha )}\log R$, then 
the probabilities that appear above become $o(N^{-2})$. Thus, the analogous of the above bound on $d_G (u,v)$ holds 
for all pairs of vertices, and implies that the diameter is proportional to $R^{\frac{1}{1-\alpha}}$ a.a.s. This upper bound 
is worse than the one obtained in~\cite{FriedKroh15}. 
\end{remark}

\section{Proof of Theorem~\ref{thm:ultrasmall}: lower bound}

For given vertices $u,v\in \V$, let $\mathcal{L}_{\zeta,N}(u,v)$ be the event that $d_G(u,v)<(2\tau-\zeta)\log R=:L$, for some $\zeta>0$.
Assume that $u$ and $v$ are peripheral vertices, that is, $t_u,t_v<\log\log R$ - by Lemma~\ref{lem:dist} this event occurs with probability $1-o(1)$.
Let $\mathcal{T}_{u,v}$ denote this event.
By Lemma~\ref{lem:relAngle}, for any $T\leq R/2-2\log\log R$, if $u$ and $v$ are connected through a path of length at most $L$
where the intermediate vertices have type at most $T$, then
\[\theta_{u,v}\leq4\nu\frac{e^T}{N}L \leq4\nu\frac{e^{R/2}}{N}\frac{L}{\log^2R}=4\frac{L}{\log^2R}.\]
Conditional on $\mathcal{T}_{u,v}$, the probability of this event is $O(L/\log^2R)=o(1)$.
Now, if there is a path of length at most $L$ that joins $u$ to $v$ that contains an intermediate vertex of type at least $R/2-2\log\log R$,
then there must be a path of length at most $L/2$ either from $u$ or from $v$ to this vertex.
Denote by $d_G (u,\xcore)$ the graph distance of the vertex $u$ to the extended core, that is to the set of vertices of type at least $R/2 - \log \log R$. 
The following lemma proves that almost all vertices are, in some sense, far away from vertices this type, immediately proving the lower bound.

\begin{lemma}\label{lem:distCore}
	Assume that $t_u\leq \log\log R$. 
	For $\zeta >0$, we have
	\[\Pr(d_G (u,\xcore)\leq (\tau-\zeta^{1/2})\log R)=o(1)\text{.}\]
\end{lemma}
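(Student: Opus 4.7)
The plan is an iterative ``threshold reduction'' argument. Set $T_0 = R/2 - 2\log\log R$, $L_0 = (\tau - \zeta^{1/2})\log R$, $\gamma = 1/(2\alpha-1)$, and pick a slowly growing $\omega = (\log\log R)^{3/2}$. Define recursively $T_{j+1} = T_j/\gamma - 2\omega$ and $L_{j+1} = L_j - 1$ for $j \geq 0$. The aim is to upper-bound $\Pr(d_G(u, \{v : t_v \geq T_0\}) \leq L_0)$ by iterating on $j$.

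Inductive step: if $d_G(u, \{t \geq T_j\}) \leq L_j$, pick any witnessing path $u = v_0, \ldots, v_k = w$ with $k \leq L_j$ and $t_w \geq T_j$. \textbf{Case (a):} all interior vertices satisfy $t_{v_i} \leq T_{j+1}$. Applying Lemma~\ref{lem:relAngle} edge by edge (exactly as in the angular argument at the top of this section), the angular span is bounded by $2L_j e^{T_{j+1} - R/2} + 2e^{(T_{j+1} + t_w)/2 - R/2}$ when $t_{v_{k-1}} + t_w \leq R$, and by $\pi$ in the disk regime. A first-moment calculation integrating over $t_w \in [T_j, R]$ splits the expected number of candidate $w$'s into three contributions; each is $O(e^{-\omega})$ under our recursion, so Case (a) contributes $o(1/L_0)$ uniformly in $j$. \textbf{Case (b):} some interior $v_i$ has $t_{v_i} > T_{j+1}$, giving $d_G(u, \{t \geq T_{j+1}\}) \leq L_j - 1 = L_{j+1}$ and reducing to the $(j+1)$-th iterate.

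Iterating $L_0$ times yields $\Pr(d_G(u, \{t \geq T_0\}) \leq L_0) \leq L_0 \cdot o(1/L_0) + \Pr(t_u \geq T_{L_0})$. Unrolling the recursion gives $T_{L_0} = T_0/\gamma^{L_0} - O(\omega) = \tfrac{1}{2} R^{\zeta^{1/2}/\tau}(1 - o(1))$, which far exceeds $\log\log R \geq t_u$, so the residual probability vanishes. The total bound is therefore $o(1)$.

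The main obst
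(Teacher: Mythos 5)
Your proposal is correct in outline and takes a genuinely different route from the paper. The paper tracks a \emph{forward} breadth-exploration process started at $u$: it shows that the maximum type $t^{(i)}$ revealed after $i$ rounds is stochastically dominated by a process that multiplies by at most $1+\delta+\zeta$ per round (Claim~\ref{clm:step}), and chains $(\tau-\zeta^{1/2})\log R$ such conditional bounds (Claim~\ref{clm:allGood}). This requires setting up and analysing the simultaneous breadth exploration process, in particular arguing (via Claim~\ref{clm:pos}) that the BFS cannot miss a high-type vertex inside the exposed angular sector. Your argument instead works \emph{backward} from the core: you define a decreasing threshold sequence $T_{j+1}=T_j/\gamma - 2\omega$, show that $\{d_G(u,\{t\geq T_j\})\leq L_j\}\subseteq \mathcal{A}_j\cup\{d_G(u,\{t\geq T_{j+1}\})\leq L_{j}-1\}$, and bound $\Pr(\mathcal{A}_j)$ by a plain first-moment count of vertices $w$ with $t_w\geq T_j$ falling inside the angular window forced by Lemma~\ref{lem:relAngle} on a path whose interior types are all $\leq T_{j+1}$. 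The arithmetic is the same in both proofs — a vertex of type $t$ typically has no neighbour of type much larger than $t/(2\alpha-1)$, because the relevant integral $\nu e^{t/2}\int_{t'}^{\infty}e^{(1/2-\alpha)s}\,ds$ is $O(e^{-\omega})$ once $t'\geq t/(2\alpha-1)+2\omega$ — but you package it as a union bound over $L_0$ thresholds rather than as a supermartingale-type argument for the exploration process. Your version avoids the BFS machinery and the monotone-coupling reduction to $t_u=\log\log R$ at the start of the paper's proof, and your choice $\omega=(\log\log R)^{3/2}$ together with $L_0 e^{-\omega}\to 0$ closes the union bound correctly; the unrolled threshold $T_{L_0}=\Theta(R^{\zeta^{1/2}/\tau})\gg\log\log R$ kills the residual event. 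One small point worth making explicit if you write this up: the ``disk regime'' third contribution is controlled not by the angular window (which is vacuous there) but simply by the expected total count of vertices with $t_w\geq R-c_0-T_{j+1}$, and the exponent $R\bigl(\tfrac12-\alpha\bigr)+\alpha T_{j+1}\leq R(\alpha-1)(\alpha-\tfrac12)<0$ needs $T_{j+1}\leq T_1<(2\alpha-1)R/2$, which your recursion supplies from $j=0$ onward.
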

We appeal to Lemma~\ref{lem:poisson_cond} for the event $\{ d_G(u,\xcore)\leq (\tau-\zeta^{1/2})\log R \}$. Clearly, this is a non-decreasing 
event in the sense that is used in that lemma. So, it suffices to prove Lemma~\ref{lem:distCore} in the $\mathcal{P}_{\{u\},\emptyset} (N;\alpha,\nu)$ 
space. 

To prove this statement, we keep track of the highest type in the neighbourhood of the vertex $u$.
Let $N^{(0)}(u)=\{u\}$, $\theta_r^{(0)}=\theta_\ell^{(0)}=0$.
For $i\geq0$, define $N^{(i)}(u)$ as the neighbours of vertices in $N^{(i-1)}(u)$ that are in clockwise direction of $u$ and have relative angle
greater than $\theta_\ell^{(i-1)}$ with $u$ or that are in anticlockwise direction of $u$ and have relative angle with $u$ 
greater than $\theta_r^{(i-1)}$.
Define $\theta_r^{(i)}$ as the maximum relative angle between $u$ and any vertex in $N^{(i)}(u)$ that is in anticlockwise direction of $u$,
setting it to $\theta_r^{(i-1)}$ if there is no such vertex.
Similarly, define $\theta_\ell^{(i)}$ as the maximum relative angle between $u$ and any vertex in $N^{(i)}(u)$ that is in clockwise direction of $u$,
setting it to $\theta_\ell^{(i-1)}$ if there is no such vertex.
This is the simultaneous breadth exploration process that will be defined in more detail in the next section.

Note that any vertex in $N^{(i)}(u)$ has graph distance $i$ to $u$, but not every vertex of distance $i$ is in $N^{(i)}(u)$.
However, we claim that the process cannot  leave a vertex that has type larger than the maximum type of any vertex in
$N_i(u):=\bigcup_{j=0}^i N^{(j)}(u)$ and is undiscovered within the sectors that have been exposed. 
For the sake of contradiction, assume that $v$ is a vertex whose  type is larger than the types of 
all vertices discovered in $N_i (u)$, but its angle with $u$ satisfies $\theta_{r}^{(k-1)}<\vartheta_{u,v}\leq \theta_r^{(k)}$, 
for some $1\leq k\leq i$. Then there are two vertices $v_{k-1} \in N^{(k-1)} (u)$ and $v_k \in N^{(k)}$ such that 
$v$ is between them; that is, $\vartheta_{v,v_{k-1}} < 0 \leq \vartheta_{v,v_k}$.  
Applying Fact~\ref{fact:triangle} with $v_{k-1},v,v_k$ playing the role of $z,y,w$ implies that $v$ is adjacent to $v_{k-1}$ and 
therefore should have been discovered and become a member of $N^{(k)}(u)$.


The above claim has also the following consequence. 
Denote by $t^{(i-1)}$ the maximum type of a vertex in $N_{i-1}(u)$.
As every vertex in $N^{(i-1)}(u)$ is further in the anticlockwise or in the clockwise direction, in terms of relative angle from $u$, than all the vertices in $N_{i-2}(u)$,
all vertices in $N^{(i)}(u)$ are either within (hyperbolic) distance $R$ and in the clockwise direction of the point $p_\ell^{(i-1)}$ of type
$t^{(i-1)}$ and of clockwise relative angle $\theta^{(i-1)}_\ell$ to $u$, or within (hyperbolic) distance $R$ and in the anticlockwise
direction
of the point $p_r^{(i-1)}$ of type $t^{(i-1)}$ and of clockwise relative angle $\theta^{(i-1)}_r$ to $u$.
Thus the highest type of a vertex in $N^{(i)}(u)$ is stochastically dominated from above by the highest type among all vertices
that have hyperbolic distance at most $R$ from a certain point of type $t^{(i-1)}$ (namely $p_r^{(i-1)}$ or $p_\ell^{(i-1)}$).
Due to this we can bound the distribution function of $t^{(i)}$ from below using Fact~\ref{fct:empty}.
Let $\hat{t}^{(i)}:=(1+\delta +\zeta)^i t_u$, for any integer $i\geq 0$.

\begin{claim}\label{clm:step}
	For $i\geq 1$, assuming that $\hat{t}^{(i-1)}<\frac{R/2-2\log\log R}{1+\delta+\zeta}$, we have
	\[\Pr(t^{(i)} < (1+\delta+\zeta)\hat{t}^{(i-1)} \ | \ t^{(i-1)} < \hat{t}^{(i-1)})\geq\exp\left(-\frac{2\nu}{(\alpha-1/2)\pi}e^{-(\alpha-1/2)\zeta \hat{t}^{(i-1)}}\right)\text{.}\]
\end{claim}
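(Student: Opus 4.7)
The plan is to combine the stochastic domination observation already stated in the paragraph preceding the claim with a direct Poisson emptiness estimate via Fact~\ref{fct:empty}. Conditional on the entire state of the exploration up through step $i-1$ and on $t^{(i-1)}<\hat{t}^{(i-1)}$, the event $\{t^{(i)}\geq \hat{t}^{(i)}\}$ is contained, by that domination, in the event that some vertex of type at least $\hat{t}^{(i)}=(1+\delta+\zeta)\hat{t}^{(i-1)}$ lies within hyperbolic distance $R$ of one of the two extremal points $p_r^{(i-1)}$, $p_\ell^{(i-1)}$, both of which have type exactly $t^{(i-1)}<\hat{t}^{(i-1)}$. The region in which such vertices would appear lies outside the sectors already exposed by the exploration (this is how the simultaneous BFS is set up), so by the spatial independence of the Poisson process the conditional distribution there is the unconditioned Poisson law. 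It therefore suffices to upper bound, for a fixed point $p$ with $t_p\leq\hat{t}^{(i-1)}$, the expected number $M_p$ of Poisson points of type at least $\hat{t}^{(i)}$ within hyperbolic distance $R$ of $p$, multiply by $2$ for the two witness points, and apply Fact~\ref{fct:empty}.

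To bound $M_p$, I would invoke Lemma~\ref{lem:relAngle}: under the hypothesis that $\hat{t}^{(i-1)}<(R/2-2\log\log R)/(1+\delta+\zeta)$ we are well below $R/2$, so all the relevant $\hat\theta_{p,q}$ are genuinely smaller than $\pi$ and a point $q$ of type $t\geq \hat{t}^{(i)}$ within distance $R$ of $p$ satisfies $\theta_{p,q}\leq 2(1+\eps)\tfrac{\nu}{N}e^{(t_p+t)/2}$. Combining this with the type density $\bar\rho(t)=(1+o(1))\alpha e^{-\alpha t}$ from Lemma~\ref{lem:dist}, and noting that in the Poissonisation the rate on $\D$ carries a factor $N/(2\pi)$ which cancels the $1/N$ from the angle,
\begin{align*}
M_p &\leq \frac{N}{2\pi}\int_{\hat{t}^{(i)}}^{R}\! 2\cdot 2(1+\eps)\frac{\nu}{N}e^{(t_p+t)/2}\,(1+o(1))\alpha e^{-\alpha t}\,dt \\
&\leq \frac{\nu(1+o(1))}{(\alpha-1/2)\pi}\,e^{t_p/2+(1/2-\alpha)\hat{t}^{(i)}},
\end{align*}
since the endpoint contribution at $t=R$ is negligible when $\alpha>1/2$.

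The final simplification uses the key identity coming from the definition $1+\delta=1/(2\alpha-1)$, which gives $(1/2-\alpha)(1+\delta)=-1/2$. Using $t_p\leq \hat{t}^{(i-1)}$ and $\hat{t}^{(i)}=(1+\delta+\zeta)\hat{t}^{(i-1)}$,
\begin{equation*}
\frac{t_p}{2}+\Bigl(\tfrac12-\alpha\Bigr)\hat{t}^{(i)} \;\leq\; \frac{\hat{t}^{(i-1)}}{2}-\frac{\hat{t}^{(i-1)}}{2}-\Bigl(\alpha-\tfrac12\Bigr)\zeta\,\hat{t}^{(i-1)} \;=\; -\Bigl(\alpha-\tfrac12\Bigr)\zeta\,\hat{t}^{(i-1)}.
\end{equation*}
Doubling $M_p$ for the two witness points and feeding the result into the Poisson void probability from Fact~\ref{fct:empty} gives exactly the stated lower bound $\exp\!\bigl(-\tfrac{2\nu}{(\alpha-1/2)\pi}e^{-(\alpha-1/2)\zeta\hat{t}^{(i-1)}}\bigr)$, up to the $1+o(1)$ factor absorbed into the constant.

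The only genuinely delicate step is ensuring that the conditioning on $\{t^{(i-1)}<\hat{t}^{(i-1)}\}$ together with the history of the exploration does not spoil the Poisson structure in the region where we look for vertices. This is handled by the standard fact that BFS-type exposure reveals only information about the sectors already scanned, so on the unexposed complement the process remains Poisson with the original intensity; and by taking the outer tube $T_\eps^+(p_r^{(i-1)})\cup T_\eps^+(p_\ell^{(i-1)})$ (which by Lemma~\ref{lem:relAngle} contains every point potentially within distance $R$ of the two extreme vertices) as a superset of the genuinely relevant region, so the Poisson estimate we obtain is a valid upper bound on the conditional probability of interest.
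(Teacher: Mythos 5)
Your proposal follows essentially the same route as the paper: reduce to a Poisson void-probability bound for a worst-case witness point of type $\hat{t}^{(i-1)}$, using the stochastic domination sketched just before the claim, then apply Fact~\ref{fct:empty} after estimating the expected number of high-type neighbours via Lemma~\ref{lem:relAngle} and Lemma~\ref{lem:dist}, finishing with the identity $(\alpha-1/2)(1+\delta)=1/2$. The overall structure and the key lemmas are correct.

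However, the constant does not close. You take one full outer tube around each of $p_\ell^{(i-1)}$ and $p_r^{(i-1)}$ and multiply by $2$. But the exploration only looks clockwise from $p_\ell^{(i-1)}$ and anticlockwise from $p_r^{(i-1)}$; the two relevant \emph{half}-tubes together have exactly the angular measure of \emph{one} full outer tube, which is what the paper uses (this is what the factor $\frac{4(1+\eps)}{2\pi}$ in its integral is). Your factor of $2$ is therefore spurious. In addition, in the step $M_p\leq \frac{\nu(1+o(1))}{(\alpha-1/2)\pi}e^{t_p/2+(1/2-\alpha)\hat t^{(i)}}$ you silently drop a factor $2\alpha(1+\eps)$, which is a fixed constant in $(1,2(1+\eps))$ for $\alpha\in(1/2,1)$ and cannot be absorbed into $(1+o(1))$. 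Carrying the constants honestly, your method yields a rate $\frac{4\alpha(1+\eps)(1+o(1))\nu}{(\alpha-1/2)\pi}$ in the exponent, which is strictly larger than the claimed $\frac{2\nu}{(\alpha-1/2)\pi}$ for every $\alpha>1/2$, so the stated inequality is not obtained. The repair is (i) count one full outer tube rather than two, and (ii) choose $\eps$ so that $(1+2\eps)\alpha<1$ — possible because $\alpha<1$ — to absorb $\alpha(1+\eps)(1+o(1))$ into $1$; this is exactly the point the paper makes explicit.

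One further (minor and fixable) point: your integral runs up to $t=R$ using the angular bound $2(1+\eps)\frac{\nu}{N}e^{(t_p+t)/2}$, but Lemma~\ref{lem:relAngle} only applies when $t_p+t<R-c_0$. The tail contribution from $t\geq R-\hat t^{(i-1)}-c_0$ is indeed $o(1)$ (one can use the trivial angular bound $2\pi$ and the computation behind Corollary~\ref{lem:innerEmpty}, noting $\hat t^{(i-1)}<(2\alpha-1)R/2$ so $\alpha(3/2-\alpha)>1/2$), but this should be said; the paper instead truncates at $\hat t=R/(2\alpha)+\omega(N)$ and invokes Corollary~\ref{lem:innerEmpty} to absorb the tail into a $+o(1)$ in the exponent.
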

\begin{proof}
    By the assumption of the claim, if $t^{(i-1)} < \hat{t}^{(i-1)}$, then
    $t^{(i-1)}<(1/(1+\delta+\zeta))(R/2-2\log\log R)<(2\alpha-1)R/2$. 
	Lemma~\ref{lem:relAngle} works for types $t$ such that $t+t^{(i-1)}<R-c_0$ for a given constant $c_0$,
	so $t<R-(1/(1+\delta))R/2$ will do.
	Recall that $1/(1+\delta)=2\alpha-1$, so $t<R(3/2-\alpha)$ is sufficient.
	But $3/2-\alpha>1/(2\alpha)$, and so if we take  $\hat{t}=R/(2\alpha) +\omega(N)$, for some sufficiently slowly growing function $\omega(N)$, we are able to 
    use Lemma~\ref{lem:relAngle} for points of type at most $\hat{t}$.
	The first part of Corollary~\ref{lem:innerEmpty} implies that the expected number of vertices of type at least $\hat{t}$ 
    in $\mathcal{G}_{\{u\},\emptyset} (N;\alpha, \nu)$ is $o(1)$.

	As discussed above, the event where $t^{(i)}\leq (1+\delta+\zeta)\hat{t}^{(i-1)}$ has no smaller probability 
    than the event that a vertex
	of type $\hat{t}^{(i-1)}$ has no neighbour of type at least $\hat{t}^{(i)}$.
	Thus by Fact~\ref{fct:empty} and Lemma~\ref{lem:relAngle}, for $\eps>0$ small enough so that $(1+2\eps)\alpha<1$ we have
	\begin{align*}
		&\Pr\left(t^{(i)} < (1+\delta+\zeta)\hat{t}^{(i-1)}\ | \  t^{(i-1)} < \hat{t}^{(i-1)} \right)\\
			&\geq\exp\left(-N\int_{(1+\delta+\zeta)\hat{t}^{(i-1)}}^{\hat{t}}\frac{4(1+\eps)}{2\pi}e^{1/2(t+\hat{t}^{(i-1)}-R)}\alpha
				e^{-\alpha t}dt + o(1)\right)\\
			&\geq\exp\left(-\frac{2(1+2\eps)\alpha\nu}{\pi}e^{\frac{\hat{t}^{(i-1)}}{2}}\int_{(1+\delta+\zeta)\hat{t}^{(i-1)}}^{\infty}e^{(1/2-\alpha)t}dt\right)\\
			&\geq\exp\left(-\frac{2(1+2\eps)\alpha\nu}{\pi}e^{\frac{\hat{t}^{(i-1)}}{2}}
				\frac{1}{\alpha-1/2}e^{(1/2-\alpha)(1+\delta+\zeta)\hat{t}^{(i-1)}}\right)\\
			&\geq\exp\left(-\frac{2(1+2\eps)\alpha\nu}{\pi}e^{\frac{\hat{t}^{(i-1)}}{2}}
				\frac{1}{\alpha-1/2}e^{(-1/2+(1/2-\alpha)\zeta)\hat{t}^{(i-1)}}\right)\\
			&\geq\exp\left(-\frac{2\nu}{(\alpha-1/2)\pi}e^{-(\alpha-1/2)\zeta \hat{t}^{(i-1)}}\right)\text{,}
	\end{align*}
	as $(\alpha-1/2)(1+\delta)=1/2$.
\end{proof}

We repeatedly apply this bound to bound the distance from the core.
Assume that $t_u=\log \log R$.
Denote by $\mathcal{U}$ the event that if we explore as above the neighbours $u$ for every 
$i <(\tau-\zeta^{1/2})\log R$ we have $t^{(i)} < \hat{t}^{(i)}$.
\begin{claim}\label{clm:allGood} Assume that $t_u=\log \log R$. 
For $\zeta >0$ small enough (depending on $\alpha$), the event
	$\mathcal{U}$ has probability $1-o(1)$ and after the steps are completed the maximum type reached is less than $R/2-2\log\log R$, 
if $N$ is sufficiently large.
\end{claim}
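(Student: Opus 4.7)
The plan is to iterate Claim~\ref{clm:step} over $i=1,2,\ldots,L$, where $L:=(\tau-\zeta^{1/2})\log R$, and take a union bound over the $L$ potential failure events $\{t^{(i)}\geq \hat{t}^{(i)}\}$. This simultaneously gives both halves of the claim: the high-probability assertion $\Pr(\mathcal{U})=1-o(1)$ and, deterministically under $\mathcal{U}$, the bound that the maximum type reached is at most $\hat{t}^{(L-1)}<R/2-2\log\log R$.

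Provided the deterministic hypothesis of Claim~\ref{clm:step} is met at each step (verified below), the inequality $1-e^{-x}\leq x$ applied to Claim~\ref{clm:step} gives
\begin{equation*}
\Pr\!\left(t^{(i)}\geq \hat{t}^{(i)}\,\middle|\, t^{(i-1)}<\hat{t}^{(i-1)}\right)\leq \frac{2\nu}{(\alpha-1/2)\pi}\,(\log R)^{-(\alpha-1/2)\zeta(1+\delta+\zeta)^{i-1}},
\end{equation*}
using $\hat{t}^{(i-1)}=(1+\delta+\zeta)^{i-1}\log\log R$ and $e^{-y\log\log R}=(\log R)^{-y}$. Handling the conditioning by a short induction on $i$ that bounds $\Pr(t^{(i)}\geq \hat{t}^{(i)})$ by the above conditional probability plus $\Pr(t^{(i-1)}\geq \hat{t}^{(i-1)})$, we obtain
\begin{equation*}
\Pr(\mathcal{U}^c)\;\leq\;\sum_{i=1}^{L}\frac{2\nu}{(\alpha-1/2)\pi}\,(\log R)^{-(\alpha-1/2)\zeta(1+\delta+\zeta)^{i-1}}.
\end{equation*}
Because $1+\delta+\zeta>1$, the exponent in the $i$-th term grows geometrically in $i$, so the sum is dominated by the $i=1$ contribution $O\!\bigl((\log R)^{-(\alpha-1/2)\zeta}\bigr)$, which is $o(1)$ since $\alpha>1/2$.

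The deterministic part, including verification of the hypothesis of Claim~\ref{clm:step}, reduces to showing $\hat{t}^{(L)}=(1+\delta+\zeta)^L\log\log R<R/2-2\log\log R$ for large $N$. Set $f(\zeta):=(\tau-\zeta^{1/2})\log(1+\delta+\zeta)$, so that $\log \hat{t}^{(L)}=f(\zeta)\log R+\log\log\log R$. By the definition of $\tau$ we have $\tau\log(1+\delta)=1$, hence $f(0)=1$, and Taylor expansion around $\zeta=0$ yields
\begin{equation*}
f(\zeta)=1-\zeta^{1/2}\log(1+\delta)+\frac{\tau\zeta}{1+\delta}+O(\zeta^{3/2}).
\end{equation*}
The negative $\zeta^{1/2}$ term dominates the positive $\zeta$ term for all sufficiently small $\zeta>0$ (depending on $\alpha$ through $\delta$ and $\tau$), so $f(\zeta)<1$ and $\hat{t}^{(L)}\leq R^{f(\zeta)}\log\log R=o(R)$, well below $R/2-2\log\log R$. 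The main subtlety is precisely this tight deterministic computation: it rests on the identity $\tau\log(1+\delta)=1$ built into the definition of $\tau$, which is what makes the exploration process reach (just short of) the core in $\approx\tau\log R$ steps and forces us to be careful about the sign of the leading $\zeta^{1/2}$ correction.
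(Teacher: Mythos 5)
Your argument is correct and follows essentially the same route as the paper: iterate Claim~\ref{clm:step}, union-bound the failure events, observe that the sum of conditional failure probabilities is dominated by the first term $O((\log R)^{-(\alpha-1/2)\zeta})$ because the exponents grow geometrically, and separately verify the deterministic type bound by showing the exponent of $R$ in $\hat{t}^{(L)}$ stays strictly below $1$ for small $\zeta$. The only cosmetic difference is that you Taylor-expand $(\tau-\zeta^{1/2})\log(1+\delta+\zeta)$ around $\zeta=0$, whereas the paper uses the cruder inequality $\log(1+\delta+\zeta)\leq\log(1+\delta)+\zeta$ to get the exponent $1-\tau^{-1}\zeta^{1/2}+\tau\zeta-\zeta^{3/2}$; both yield the same conclusion that the negative $\zeta^{1/2}$ term dominates.
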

\begin{proof}
On this event, after executing the $(\tau-\zeta^{1/2})\log R$ steps we have reached type less than
\begin{align*}
	    &(1+\delta+\zeta)^{(\tau-\zeta^{1/2})\log R}\log\log R=e^{\log(1+\delta+\zeta)(\tau-\zeta)\log R}\log\log R\\
		&\leq R^{(\log(1+\delta)+\zeta)(\tau-\zeta^{1/2})}\log\log R\\
		&= R^{(\tau^{-1}+\zeta)(\tau-\zeta^{1/2})}\log\log R\\
		&= R^{1-\tau^{-1}\zeta^{1/2}+\tau\zeta-\zeta^{3/2}}\log\log R = o( R/2-2\log\log R).
\end{align*}
Moreover, we are able to apply Claim~\ref{clm:step} repeatedly for this number of steps and deduce that $\mathcal{U}$ has probability 
\begin{align*}
	\Pr(\mathcal{U})&\geq\prod_{i=0}^{(\tau-\zeta^{1/2})\log R}\exp\left(-\frac{2\nu}{(\alpha-1/2)\pi}e^{-(\alpha-1/2)\zeta
			(1+\delta+\zeta)^i\log\log R}\right)\\
		&\geq\prod_{i=0}^{(\tau-\zeta^{1/2})\log R}\left(1-\frac{2\nu}{(\alpha-1/2)\pi}e^{-(\alpha-1/2)\zeta (1+\delta+\zeta)^i\log\log R}\right)\\
		&\geq1-\sum_{i=0}^{(\tau-\zeta^{1/2})\log R}\frac{2\nu}{(\alpha-1/2)\pi}e^{-(\alpha-1/2)\zeta (1+\delta+\zeta)^i\log\log R}\\
		&\geq1-\frac{4\nu}{(\alpha-1/2)\pi}e^{-(\alpha-1/2)\zeta\log\log R}=1-o(1)\text{.}
\end{align*}
\end{proof}

\begin{proof}[Proof of Lemma~\ref{lem:distCore}]
	Fact~\ref{fact:triangle} implies that increasing the type of a vertex will keep all edges intact, 
    so any path will stay a path if we increase the type of one of its vertices.
	Thus by a simple coupling argument we have that $\Pr(d(u,\xcore)\leq d|t_u)\leq\Pr(d(u,\xcore)\leq d|t_u')$ for $t_u\leq t_u'$.
	We can thus assume that $t_u=\log\log R$.
	By Claim~\ref{clm:allGood}, a.a.s. executing $(\tau-\zeta^{1/2})\log R$ steps yields maximum type that is less than $R/2-2\log\log R$, so
	\[\Pr(d(u,\xcore)\leq (\tau-\zeta^{1/2})\log R)=o(1)\text{.}\]
\end{proof}

\section{Proof of Theorem~\ref{thm:almostultrasmall}}

Here, we consider the case where $\alpha >1$. In this case, the main result in~\cite{bode_fountoulakis_mueller} implies that 
all components contain at most sublinear number of vertices. More precisely, we show that a.a.s. all components contain at 
most $N^{1/\alpha}$ vertices (up to a poly-logarithmic factor). In fact, there are many components of polynomial size (as there are 
many vertices of polynomial degree which do not belong to the same component). 

To prove Theorem~\ref{thm:almostultrasmall}, for any given vertex we explore a path that in a certain sense traverses its component.
We show that almost all vertices are close to such a \textit{spanning path}, which itself is short.
This results in short distances for most pairs of vertices which belong to the same component.

Note that since $\alpha>1$, a.a.s. there are no vertices inside the disk of radius $R/2$, that is, there are no core vertices in the resulting 
random graph. 

Consider also a connected component $C$ of the graph  $G$ embedded
into $\D$: for each pair of vertices which is an edge we connect the corresponding points by the geodesic segment that joins them. 
Let $I(C)$ denote the image of the embedding inside $\D$ equipped with a system of polar coordinates. Let $\theta : I(C) \to [0,2\pi)$ be the projection of $I(C)$ on the $\theta$-coordinate. Since this mapping  is continuous and
$I(C)$ is a closed set, it follows that $\theta (I(C))$ is a closed interval. 
If $\theta (I(C)) \subset [0,2\pi]$ (that is, a strict subset of $[0,2\pi)$), then 
the pre-images of the two end points must be vertices of $C$. We denote them by $v_f(C)$ and $v_\ell(C)$ (of course, a.a.s. there are no other points 
in $I(C)$ that are mapped to the endpoints of the $\theta (I(C))$).


\begin{definition}
For a connected component $C$, 
if the vertices $v_f(C)$ and $v_\ell(C)$ exist, we call a path $P=v_1,\dots,v_\ell$ in $C$ a \emph{spanning path} of $C$. 
	
	An \emph{umbrella} $U$ with root vertex $v$ is a spanning path $P$ of the component of $v$ together with a path connecting $v$ to $P$.
	The \emph{width} of the umbrella $U$ is the maximum among the (graph) distances of $v$ from the two endpoints of the associated spanning path.
\end{definition}
	%
\begin{figure}[h]
\centering
\includegraphics[width=0.5\columnwidth]{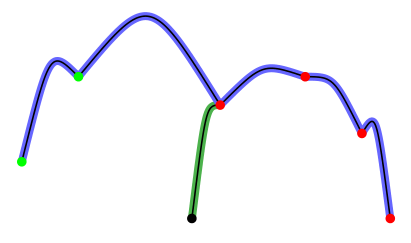}
	\caption{Example of an umbrella.}\label{fig:umbrella}
\end{figure}
Note that if $v',v''$ are two consecutive vertices of a spanning path and a vertex $u$ is contained in the sector of $\D$ defined by the angle $v'Ov''$ 
and, moreover, $t_u > \min \{t_u', t_{u''} \}$, then by Fact~\ref{fact:triangle} 
it is connected to one of them. 
Also, since there is no restriction on the length of the paths, if $v$ is on some spanning path $P$, then $P$ is an umbrella with root $v$.

The following follows immediately as the vertices of a component that are to the farthest in clockwise and anticlockwise direction are always
in a spanning path:

\begin{corollary}\label{cor:envMeet}
	If $P$ and $P'$ are spanning paths of the same component, then $P\cap P'\neq\emptyset$.
\end{corollary}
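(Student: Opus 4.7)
The plan is to observe that the corollary is essentially immediate from the definition of a spanning path, once one notes that the ``farthest clockwise'' and ``farthest anticlockwise'' vertices of a component are (almost surely) well-defined unique vertices. By definition, if $P = v_1, \ldots, v_\ell$ is a spanning path of a component $C$, then $v_1$ is \emph{the} vertex of $C$ attaining the extreme clockwise angular position and $v_\ell$ is \emph{the} vertex attaining the extreme anticlockwise angular position. These are intrinsic properties of $C$, independent of which spanning path we look at.

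Hence I would first argue that, a.a.s., no two vertices of $\Pnan$ share the same angular coordinate: since the angular coordinates are i.i.d.\ continuous random variables (uniform on $(0,2\pi]$) and the number of vertices is a.a.s.\ finite, with probability $1$ no two vertices lie on the same ray from the origin. (Strictly speaking, for this corollary we only need the statement within a single component, but the global statement is just as easy.) On this a.a.s.\ event, within any finite component $C$ the vertex of $C$ with minimal angular coordinate and the vertex with maximal angular coordinate (measured in the natural cyclic sense made unambiguous by the remark that, for $\alpha > 1$, each component occupies an angular sector of size $o(1)$) are both uniquely determined.

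Consequently, if $P = v_1, \ldots, v_\ell$ and $P' = v'_1, \ldots, v'_{\ell'}$ are both spanning paths of $C$, then by definition $v_1 = v'_1$ and $v_\ell = v'_{\ell'}$. In particular $v_1 \in P \cap P'$, so $P \cap P' \neq \emptyset$. I would conclude by noting that the ``a.a.s.'' caveat can be absorbed into the overall statement of the main theorem, since Theorem~\ref{thm:almostultrasmall} is itself an a.a.s.\ result; alternatively, one can simply fix an arbitrary tie-breaking rule (e.g.\ break ties on equal angles by smallest radius) in the definition of the extreme vertices, and then the argument is deterministic. There is no real obstacle here — the only subtlety is pinning down what ``farthest clockwise/anticlockwise'' means inside a component, which is precisely why the preceding paragraph observing that each component spans an angle of $o(1)$ (so clockwise/anticlockwise order is unambiguous within the component) is invoked.
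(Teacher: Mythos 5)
Your argument is correct and is essentially the paper's own one-line observation: both spanning paths of a component must contain, as their endpoints, the (unique) farthest-clockwise and farthest-anticlockwise vertices of that component, so they share those endpoints. The extra care you take about well-definedness (distinct angular coordinates a.s., and the $o(1)$ angular span of each component making clockwise/anticlockwise unambiguous) is implicit in the paper's setup preceding the definition of a spanning path.
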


This fact allows us to do the following:
Given any pair of vertices $u$ and $v$ in the same component, construct a $u$-$v$-path by traversing an umbrella $U_u$ of $u$ until the
first vertex $z$ that is on an umbrella $U_v$ of $v$ is reached.
Then $uU_uzU_vv$ is a path connecting $u$ and $v$.
Thus the following lemma is key to the proof of Theorem~\ref{thm:almostultrasmall}.

\begin{lemma}\label{lem:umbrellaLength}
	Let $\eps>0$.
	For a vertex $v$ of $\Gnan$, a.a.s. there is an umbrella for $v$ in its component that has width at most $\log^{1+\eps}\log N$.
\end{lemma}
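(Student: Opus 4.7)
The plan is to build an umbrella for $v$ by constructing two monotone paths in the component $C(v)$ of $v$: one going clockwise towards the clockwise-extreme vertex $w_\ell^{\ast}$ of $C(v)$, and one going anticlockwise towards the anticlockwise-extreme vertex $w_r^{\ast}$. By symmetry I focus on the anticlockwise direction and argue that a.a.s.\ there is a path in $C(v)$ from $v$ to $w_r^{\ast}$ of length at most $\tfrac12 \log^{1+\eps}\log N$.

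First, I would normalise using Lemma~\ref{lem:dist}, conditioning on $t_v \leq \log\log N$ (if $t_v$ is larger, $v$ lies closer to the origin and the argument only becomes easier). By Corollary~\ref{lem:innerEmpty}, I would also assume that every vertex of $\Gnan$ has type at most $\bar{t} := R/(2\alpha) + \omega(N)$ for some slowly growing $\omega$. Next, I would run a greedy anticlockwise exploration $v = v_0, v_1, v_2, \ldots$ in which $v_{i+1}$ is a neighbour of $v_i$ that is strictly anticlockwise of every previously exposed vertex and that maximises the type among all such candidates. Using Claim~\ref{clm:pos} together with Fact~\ref{fact:triangle}, this greedy procedure does not skip any candidate for $w_r^{\ast}$: any component vertex anticlockwise of $v_i$ whose type exceeds $t_{v_i}$ is forced to be adjacent to $v_i$ and hence becomes available. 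The process therefore terminates exactly when $v_i = w_r^{\ast}$, and the length of the resulting path equals the graph distance along the perimeter from $v$ to $w_r^{\ast}$.

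The main quantitative task is to show that the exploration terminates within $K := \tfrac12 \log^{1+\eps}\log N$ steps with probability $1-o(1)$. For this I would invoke Lemma~\ref{lem:poisson_cond} to pass to the Poissonised model, and then step by step apply Fact~\ref{fct:empty} to fresh sectors of $\D$ below the already-explored frontier. At each step, either a suitable $v_{i+1}$ of sufficiently high type exists (giving a substantial angular increment) or else the anticlockwise span of $C(v)$ is already exhausted. A union bound over the $K$ steps, combined with the overall cap $\bar{t}$ on the type and the small angular span of any component in the $\alpha>1$ regime, then yields the desired bound.

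The main obstacle is that, in contrast with the $\alpha<1$ regime where Lemma~\ref{lem:explode} yields multiplicatively exploding types, here greedy type maximisation actually \emph{decreases} the type per step: the highest accessible neighbour type of a vertex of type $t$ is approximately $t/(2\alpha-1) < t$. Thus the exploding-path argument of Section~3 cannot be imported directly. Instead, one has to exploit the fact that for $\alpha>1$ components have very small angular extent and to track the angular progress of the exploration rather than type growth. Organising this progress into $\Theta(\log\log N)$ phases of $\log^{\eps}\log N$ steps each, and taking a union bound over both phases and steps within each phase, is exactly what produces the $\log^{1+\eps}\log N$ bound in place of the $\log\log N$ bound of Theorem~\ref{thm:ultrasmall}.
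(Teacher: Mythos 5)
Your overall scaffolding (bidirectional exploration, Poissonisation via Lemma~\ref{lem:poisson_cond}, restricting types with Corollary~\ref{lem:innerEmpty}, and using Claim~\ref{clm:pos}/Fact~\ref{fact:triangle} to guarantee the frontier never skips the extreme vertices) matches the paper's, and your observation that the maximal reachable type of a type-$t$ vertex is roughly $t/(2\alpha-1)<t$ is exactly the right quantitative phenomenon. But at the moment where this observation should be \emph{used}, the proposal pivots away from it: you declare that one should ``track the angular progress of the exploration rather than type growth'' and then gesture at an unspecified phase decomposition. That pivot is where the proof breaks. There is no argument for why the angular extent of the component is exhausted after $\log^{1+\eps}\log N$ angular increments, nor any estimate of the angular increment per step (which itself depends on the types of consecutive vertices), so the claimed union bound over phases has nothing to bound. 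As written, the proposal never actually produces a probabilistic estimate controlling the stopping time.

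The paper instead stays with the type process. It shows that, conditional on the history, the next maximal type is stochastically dominated by a Gumbel random variable with mean about $t_i/(2\alpha-1)$ plus a constant, so for $t_i$ above a fixed threshold $U_{\alpha,\nu,\eps}$ the process $(t_i)$ is a supermartingale with multiplicative decay rate $\lambda_\alpha=\alpha/(2\alpha-1)<1$ (inequality (\ref{eq:Supermart})). Since the initial type is at most $R/(2\alpha)+\omega(N)=O(R)$, a supermartingale estimate (Claim~\ref{clm:length_int}) shows that each excursion of $(t_i)$ above $U_{\alpha,\nu,\eps}$ ends within $\log_{1/\lambda_\alpha}^{1+\eps'}R$ steps a.a.s., while below the threshold the process terminates with probability at least a fixed $p>0$ per step (\ref{eq:lowInterval}). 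Grouping these into epochs yields the bound $T\le\log_{1/\lambda_\alpha}^{1+3\eps'}R=O(\log^{1+\eps}\log N)$. In short: the $\eps$ in the exponent is paid for by the high-probability supermartingale estimate and the epoch structure, not by a phase count over angles. Your proposal correctly identifies the decay of types and even the $\Theta(\log\log N)\times\log^{\eps}\log N$ shape of the final bound, but it abandons the decay before extracting the supermartingale argument that makes the bound rigorous, and the replacement angular argument is not supplied. A secondary issue is that the one-sided greedy exploration you describe would need its own version of Lemma~\ref{lem:roundsUmbrella}; the justification you give via Claim~\ref{clm:pos} only shows that a higher-type vertex between two adjacent explored vertices is adjacent to one of them, which is not yet a proof that the greedy walk reaches $w_r^{\ast}$ without backtracking, and the stochastic analysis is cleaner for the simultaneous breadth exploration the paper uses because the ``maximum type at round $i$'' is then a well-defined scalar to run the supermartingale on.
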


For the proof of this lemma we define the \textit{simultaneous breadth exploration process} starting at a vertex $v$ similar to the one
that we introduced in~\cite{bode_fountoulakis_mueller}.
Here, we keep track of two sets of vertices $V_\ell$ and $V_r$, which both start out as $\{v\}$.
Roughly speaking, we update the two sets adding the neighbours of the current sets that are located in the clockwise and anticlockwise
direction from the ``current" vertices, respectively.
If there are no neighbours that are farther in the clockwise direction of $V_r$ and no neighbours that are farther in the anticlockwise
direction of $V_\ell$, then the process stops.
%
We define the process starting at vertex $v$ as the following steps:
\begin{enumerate}[(i)]
	\item Let $V_\ell^{(0)}=V_r^{(0)}=\{v\}$ and let $i:=1$.
	\item Let $V_\ell^{(i)}$ be the set of vertices not in $V_\ell^{(i-1)}\cup V_r^{(i-1)}$ that are neighbours of some vertex 
		in $V_\ell^{(i-1)}\cup V_r^{(i-1)}$ and are in the clockwise direction of every vertex in 
    $\bigcup_{j=0}^{i-1}\{V_\ell^{(j)}\cup V_r^{(j)}\}$. 
    We define similarly the set $V_r^{(i)}$ as the set of vertices not in $V_\ell^{(i-1)}\cup V_r^{(i-1)}$ that  are neighbours of some vertex
    in $V_\ell^{(i-1)}\cup V_r^{(i-1)}$ and are in the anticlockwise direction of every vertex in 
   $\bigcup_{j=0}^{i-1}\{V_\ell^{(j)}\cup V_r^{(j)}\}$.
	\item If $V_\ell^{(i)}=\emptyset=V_r^{(i)}$, then stop.
		Otherwise, let $i:=i+1$ and go to step (ii).
\end{enumerate}
We call a repetition of steps (ii) and (iii) a \emph{round}.
To prove Lemma~\ref{lem:umbrellaLength}, we show that this process yields an umbrella and bound the number of steps needed until completion.

\begin{lemma}\label{lem:roundsUmbrella}
	If the simultaneous breadth exploration process starting at a vertex $v$ stops after $k$ rounds, then there is an umbrella for $v$
that has width at most $k$. 
\end{lemma}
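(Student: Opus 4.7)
The plan is to construct an explicit umbrella for $v$ directly from the data of the exploration process. Let $L$ and $R^*$ denote the clockwise-most and anticlockwise-most vertices, respectively, among all vertices discovered over the $k$ rounds, i.e.\ in $\bigcup_{i\leq k}(V_\ell^{(i)}\cup V_r^{(i)})$. I will (i) build paths from $v$ to $L$ and to $R^*$ of length at most $k$ each, (ii) show that $L$ and $R^*$ coincide with the actual clockwise and anticlockwise extremes of the component $C$ of $v$, and then (iii) assemble the pieces into an umbrella of size at most $k$.

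Step (i) is immediate from the definition of the process: every $x\in V_\ell^{(i)}\cup V_r^{(i)}$ is a neighbour of some $p(x)\in V_\ell^{(i-1)}\cup V_r^{(i-1)}$, so fixing such a parent for each discovered $x$ and iterating yields a graph path from $x$ back to $V_\ell^{(0)}\cup V_r^{(0)}=\{v\}$ of length equal to the round in which $x$ was first added. In particular, this produces paths $P_L$ from $v$ to $L$ and $P_R$ from $v$ to $R^*$ of length at most $k$. Granting (ii), step (iii) is also routine: concatenating the reversal of $P_L$ at $v$ with $P_R$ gives a walk from $L$ to $R^*$ in $G$ passing through $v$, and shortcutting to a simple path $P$ produces a spanning path of $C$ (by (ii)). The vertex $v$ either still lies on $P$ or is joined to some vertex of $P$ by a sub-walk of what was shortcut; in either case, $\max(d_G(v,L),d_G(v,R^*))\leq\max(|P_L|,|P_R|)\leq k$, so $P$ together with a shortest path from $v$ to $P$ is an umbrella of size at most $k$.

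The heart of the argument is step (ii). I would argue the clockwise side by contradiction: suppose some $z\in C$ has angle strictly clockwise of $\theta_L$, and pick such $z$ minimising $d_G(v,z)$. On a shortest path $v=p_0,\ldots,p_m=z$, each $p_j$ with $j<m$ must have angle at least $\theta_L$ (else $p_j$ would be a closer counterexample), so the edge $p_{m-1}z$ straddles $\theta_L$ angularly. If $p_{m-1}$ is itself discovered, then in the round immediately following its discovery the process considered $z$ as a neighbour of $p_{m-1}$; since $\theta_z<\theta_L$ is strictly more clockwise than every discovered vertex, $z$ would have been added to $V_\ell$, contradicting termination after round $k$. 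If $p_{m-1}$ is undiscovered, then it was processed at some time $j$ as a neighbour of a round-$(j-1)$ vertex and discarded because its angle was interior to the then-current discovered range; the very reason $p_{m-1}$ was discarded — an application of the adjacency criterion in Lemma~\ref{lem:relAngle} in the spirit of Claim~\ref{clm:pos} — forces the existence of a discovered vertex $u$ with $\theta_z<\theta_u<\theta_{p_{m-1}}$ and $t_u\geq t_{p_{m-1}}$. Now Claim~\ref{clm:pos} applied to the triple $(z,u,p_{m-1})$ upgrades the edge $p_{m-1}z$ to an edge $uz$, so $z$ — being a clockwise-extending neighbour of the discovered vertex $u$ — would have been added, again contradicting termination. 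The anticlockwise case is entirely symmetric.

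I expect the main obstacle to be the parenthetical bookkeeping inside step (ii): extracting a concrete discovered witness $u$ of type at least $t_{p_{m-1}}$ out of the mere fact that $p_{m-1}$ was processed but not added. The content of this extraction is essentially the contrapositive of the type monotonicity in Claim~\ref{clm:pos} — when a neighbour of a discovered vertex is discarded as angularly interior, some already-added extreme must dominate its type, for otherwise that neighbour itself would have extended the discovered range at the time. Making this comparison precise, and verifying it at the correct round (so that the subsequent round of $u$ is indeed within the $k$ rounds before termination), is where the technical work of the proof lies.
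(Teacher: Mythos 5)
Your high-level plan is the same as the paper's: show by contradiction that the angular extremes of $C(v)$ are discovered, by arguing that the escaper $z$ (or, in your notation $R^*$, $L$) would have been added to some $V_\ell^{(i)}$ or $V_r^{(i)}$. Steps (i) and (iii) are fine, and in step (ii) the case where $p_{m-1}$ is already discovered is handled correctly. The gap, as you yourself flag, is the undiscovered-$p_{m-1}$ branch, and it is a real one, not merely bookkeeping.

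There are two problems. First, $p_{m-1}$ was never ``processed'': the simultaneous breadth exploration process only inspects neighbours of vertices already placed in $V_\ell^{(i-1)}\cup V_r^{(i-1)}$, and on a shortest $v$--$z$ path the intermediate vertices $p_1,\dots,p_{m-1}$ need not be adjacent to any discovered vertex, so there is no ``time $j$'' at which $p_{m-1}$ was considered and discarded. Second, and more fundamentally, the asserted existence of a discovered vertex $u$ with $\theta_z<\theta_u<\theta_{p_{m-1}}$ and $t_u\geq t_{p_{m-1}}$ has no basis: the process is driven purely by angular extremality and records nothing about types, so an angularly interior vertex of arbitrarily large type is simply never added, and no discovered vertex need dominate it. The sentence ``for otherwise that neighbour itself would have extended the discovered range at the time'' is not a valid contrapositive -- an angularly interior vertex would not extend the range regardless of its type.

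The paper closes this gap with a \emph{two-case} analysis rather than a single type-dominating witness. One locates two \emph{adjacent} discovered vertices $x$ and $y$ with $x$ discovered at round $i-1$, $y$ at round $i$, and $p_{m-1}$ angularly between them (this exists simply because $p_{m-1}$ is in the exposed range and $y$ is adjacent to some vertex of the previous round). Then: if $t_{p_{m-1}} < t_y$, Claim~\ref{clm:pos} applied to the triple $(p_{m-1},y,z)$ (with the edge $p_{m-1}z$) shows $y$ is adjacent to $z$, so $z$ is discovered at round $i+1$; whereas if $t_{p_{m-1}} \geq t_y$, Claim~\ref{clm:pos} applied to $(x,p_{m-1},y)$ (with the edge $xy$) shows $p_{m-1}$ is adjacent to $x$, and since $p_{m-1}$ lies angularly beyond the round-$(i-1)$ extreme, $p_{m-1}$ itself should then have been discovered at round $i$ -- a contradiction. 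Your proposal handles only the analogue of the first case and silently assumes its hypothesis holds. You would need to add the second case (and drop the claim that a dominating $u$ must exist) to make the argument complete.
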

\begin{proof}
	Let $C(v)$ denote the connected component that $v$ belongs to. 
    Let $V'_i=\bigcup_{j=0}^{i}\{V_\ell^{(j)}\cup V_r^{(j)}\}$, that is, the set of vertices discovered up to round $i$. 
	We denote by $v_\ell'$ the vertex in $V_i'$ with the largest relative angle with $v$ in the clockwise direction. 
We let $\theta_\ell^{(i)}$ be this angle and let $t_\ell^{(i)}$ be the type of this vertex. 
Similarly, let $v_r'$ be the vertex of $V_i'$ that is the farthest in the anticlockwise direction,
	and let $\theta_r^{(i)}$ and $t_r^{(i)}$ denote its angle and type. 
	Note that there is an edge between some vertex $v_\ell$ in $V'_{i-1}$ to the vertex $v_\ell'$ in $V_\ell^{(i)}$ and also an edge
    between some vertex $v_r \in V'_{i-1}$ and the vertex $v_r'$.
%
 	
	We now claim that if the process stops at round $k$, then the vertices $\hat v_r$ and $\hat v_\ell$ that are the farthest to 
    the anticlockwise and clockwise direction of $C(v)$ belong to $V'_{k-1}$.
	Note that $V_\ell^{(k)}=V_r^{(k)}=\emptyset$, so $V'_{k-1}=V'_{k}$.
	Assume this is not the case, so without loss of generality $\hat v_r\notin V'_{k-1}$.
	As $v$ and $\hat v_r$ are in the same component, there is a path $P$ from $v$ to $\hat v_r$.
	Let $w$ be the first vertex on $P$ that is outside the range of angles from $\theta_\ell^{(k-1)}$ to $\theta_r^{(k-1)}$. 
	Since $\hat v_r$ is the vertex that is farthest in the anticlockwise direction and $\hat v_r\notin V'_k$ this vertex must exist.
	Let $u$ be the predecessor of $w$ on $P$.
	We cannot have $u\in V'_k$ as otherwise $w$, being farther in the clockwise or anticlockwise direction than any other vertex in $V'_k$,
	must also be in $V'_k$ by the choice made in step (ii).
	There exists an $i<k$ and two \emph{adjacent} vertices $x$ and $y$ such that $x$ has been discovered at round $i-1$ and 
    $y$ has been discovered at round $i$ and $u$ is between $x$ and $y$. Now, if $t_u \geq t_y$, then by Claim~\ref{clm:pos} 
    ($x,u,y$ playing the role of $w,y,z$) it follows that $u$ is adjacent to $x$ as well. If $t_u < t_y$, then again Claim~\ref{clm:pos} 
    implies that $y$ is adjacent to $w$. 	
Hence, in either case $w$ would have been discovered by round $i+1$, whereby $w\in V_r^{(i+1)}\cup V_\ell^{i+1}\subseteq V'_k$; a
    contradiction.
	
	So both $\hat v_\ell$ and $\hat v_r$ are in $V'_k$.
	Note that every vertex in $V_\ell^{(i)}\cup V_r^{(i)}$ has a neighbour in $V_\ell^{(i-1)}\cup V_r^{(i-1)}$, so we can find a paths
	$P_\ell$ and $P_r$ of length at most $k$ from $\hat v_\ell$ to $v$ and from $\hat v_r$ to $v$, respectively.
	Together, possibly deleting redundant subpaths in $v_\ell P_\ell vP_rv_r$, we have an umbrella for $v$ of width at most $k$.
\end{proof}

We are now ready to prove Lemma~\ref{lem:umbrellaLength}
\begin{proof}[Proof of Lemma~\ref{lem:umbrellaLength}]

	We aim to bound the number of rounds it takes for the simultaneous breadth exploration process started at some vertex $v$ to stop.
	By Corollary~\ref{lem:innerEmpty}, it would be sufficient to consider a variation of the simultaneous breadth exploration process 
	where we expose only those vertices that have type at most $R/(2\alpha) + \omega (N)$, for some slowly growing function 
    $\omega (N) \rightarrow \infty$. We will use the same notation for the parameters of the process as in the unmodified process. 

    Let $T$ denote the stopping time of this process.
	Without loss of generality, assume that $V_\ell^{(i)},V_r^{(i)}\neq\emptyset$ for $i=1,\dots,T-1$.
	Define $V'_i$, $\theta_\ell^{(i)}$ and $\theta_r^{(i)}$ as in the previous proof (but for the modified process).
	Unlike the last proof, let $t_\ell^{(i)}$ and $t_r^{(i)}$ be the maximum types of vertices in $V_\ell^{(i)}$ and $V_r^{(i)}$, respectively,
	and they are set  to $0$, if the corresponding set contains no vertices.
	Let $t_i=\max\{t_\ell^{(i)},t_r^{(i)}\}$. Let $p_\ell^{(i)}$ be the point of type $t_i$ and angle $\theta_\ell^{(i)}$ in the 
	clockwise direction from $v$. Similarly, let $p_r^{(i)}$ be the point of type $t_i$ and angle $\theta_r^{(i)}$ in the 
	anticlockwise direction from $v$.  
	\begin{claim} \label{clm:belonging}
	We have $V_\ell^{(i+1)} \subset T_\eps^+(p_\ell^{(i)})$ and $V_r^{(i+1)} \subset T_{\eps}^+ (p_r^{(i)})$. 
	\end{claim}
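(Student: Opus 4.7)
The plan is to prove the first containment $V_\ell^{(i+1)}\subset T_\eps^+(p_\ell^{(i)})$; the second follows by symmetry. Fix $w\in V_\ell^{(i+1)}$. By definition of the process, $w$ has a neighbor $y\in V_\ell^{(i)}\cup V_r^{(i)}$, and $w$ lies strictly clockwise of every vertex discovered through round $i$. In particular, if we write $\vartheta_x$ for the signed angle of a point $x$ measured so that clockwise angles from $v$ are positive, then $\vartheta_w > \theta_\ell^{(i)} = \vartheta_{p_\ell^{(i)}}$, and for any $y\in V_r^{(i)}$ we have $\vartheta_y \le 0$.

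Next I would invoke the applicability of Lemma~\ref{lem:relAngle}. Because the exploration process has been restricted to vertices of type at most $R/(2\alpha)+\omega(N)$, and $\alpha>1$, the sum $t_w+t_y$ is at most $R/\alpha+2\omega(N)<R-c_0$ for all $N$ sufficiently large, so the outer-tube characterization is valid. Since $wy\in E$, Lemma~\ref{lem:relAngle} gives
\[
\theta_{w,y}\;\le\;\hat{\theta}_{w,y}\;=\;\min\!\left\{2(1+\eps)\frac{\nu}{N}e^{(t_w+t_y)/2},\ \pi\right\}.
\]

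Now I would finish by a short angular comparison, case-split on where $y$ sits. If $y\in V_\ell^{(i)}$, then $0\le \vartheta_y\le\theta_\ell^{(i)}<\vartheta_w$, so the clockwise-measured angle satisfies $\theta_{w,p_\ell^{(i)}}=\vartheta_w-\theta_\ell^{(i)}\le \vartheta_w-\vartheta_y=\theta_{w,y}$. If instead $y\in V_r^{(i)}$, then $\vartheta_y\le 0<\theta_\ell^{(i)}<\vartheta_w$, and writing $\theta_{w,y}=\vartheta_w-\vartheta_y$ we again get $\theta_{w,p_\ell^{(i)}}=\vartheta_w-\theta_\ell^{(i)}\le\vartheta_w\le\theta_{w,y}$. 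In both cases, using $t_y\le t_i$,
\[
\theta_{w,p_\ell^{(i)}}\;\le\;\theta_{w,y}\;\le\;2(1+\eps)\frac{\nu}{N}e^{(t_w+t_y)/2}\;\le\;2(1+\eps)\frac{\nu}{N}e^{(t_w+t_i)/2},
\]
which, together with the type bound $t_w+t_i<R-c_0$ established above, is exactly the condition $w\in T_\eps^+(p_\ell^{(i)})$.

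The only delicate point is the second case: one has to notice that even though $y$ may sit on the anticlockwise side of $v$, the inequality $\theta_{w,p_\ell^{(i)}}\le\theta_{w,y}$ still holds because $\theta_\ell^{(i)}\ge 0$ is subtracted. I do not expect any genuine obstacle; the claim is really a bookkeeping consequence of Lemma~\ref{lem:relAngle} combined with the definitions of $p_\ell^{(i)}$ (maximal clockwise angle, type set to the round-$i$ maximum $t_i$) and the exploration rule (which forces $w$ to be the farthest clockwise among everything seen so far).
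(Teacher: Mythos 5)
Your argument is correct and matches the paper's own proof: both fix a new vertex $w\in V_\ell^{(i+1)}$ with a neighbor $y$ in $V_\ell^{(i)}\cup V_r^{(i)}$, invoke Lemma~\ref{lem:relAngle} on the edge $wy$ to bound $\theta_{w,y}$, observe that $p_\ell^{(i)}$ lies angularly between $w$ and $y$ so that $\theta_{w,p_\ell^{(i)}}\leq\theta_{w,y}$, and finish with the type bound $t_y\leq t_i$ together with the modified process's cap $t\leq R/(2\alpha)+\omega(N)$ to verify both the angular and the $t_w+t_i<R-c_0$ conditions of $T_\eps^+(p_\ell^{(i)})$. Your explicit case split on whether $y$ lies on the clockwise or anticlockwise frontier fills in a step the paper states in a single line (and your use of $t_i$ rather than $t_\ell^{(i)}$ is actually the cleaner choice, since $p_\ell^{(i)}$ has type $t_i$).
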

	\begin{proof}[Proof of Claim~\ref{clm:belonging}]  
	Let $p$ be a point that is within hyperbolic distance $R$ from $u \in V_\ell^{(i)} \cup V_r^{(i)}$ and 
    satisfies $\vartheta_{p,v} > \theta_{\ell}^{(i)}$. 
    Let $u'$ be the point of type $t_\ell^{(i)}$, which has $\theta_{u,u'}=0$. 
    
    Note that $\vartheta_{p,p_\ell^{(i)}} \leq \vartheta_{p,u}$. 
    Since $p \in T_\eps^+(u)$, we have $\vartheta_{p,u} \leq 2(1+\eps)\frac{\nu}{N} e^{\frac{t_p+t_u}{2}}$. 
    As $t_u \leq t_{u'}=t_\ell^{(i)}$, it follows that $\vartheta_{p,u}\leq 2(1+\eps ) \frac{\nu}{N} e^{\frac{t_p+t_\ell^{(i)}}{2}}$. 
    In other words,  $p \in T_{\eps}^+(p_\ell^{(i)})$. Thereby, $V_\ell^{(i+1)} \subset T_{\eps}^+ (p_\ell^{(i)})$.   
  
    The proof that $V_r^{(i+1)} \subset T_{\eps}^+ (p_r^{(i)})$ is analogous.    
	\end{proof}

The above claim implies that the highest type of a vertex in $V_\ell^{(i+1)}$, which we denoted by $t_\ell^{(i)}$, 
is stochastically dominated by the highest type among the vertices in
$\left\{p \in T_\eps^+ ( p_{\ell}^{(i)}) \ : \ \vartheta_{p,p_{\ell}^{(i)}}>0, \ t_p < R/(2\alpha) + \omega(N)    \right\}$ .
Similarly, the highest type of a vertex in $V_r^{(i+1)}$, which we denoted by $t_r^{(i)}$ 
is stochastically dominated by the highest type among the vertices in
$\left\{p \in T_\eps^+ ( p_{r}^{(i)}) \ : \ \vartheta_{p,p_{r}^{(i)}} < 0, \ t_p < R/(2\alpha) + \omega(N)       \right\}$ .
Let $ T_\ell (p_{\ell}^{(i)} )$  and $T_r (p_r^{(i)} )$ denote these two sets.

Thus, $t_{i+1}$ is stochastically bounded from above by the largest type in $T_\ell (p_{\ell}^{(i)}) \cup T_r (p_r^{(i)})$. 
In turn, this is stochastically bounded from above by the maximum type of a vertex in $T_\ell (p^{(i)}) \cup T_r ( p^{(i)})$ for a point
$p^{(i)}$ of type $t_i = \max \{t_\ell^{(i)},t_r^{(i)}\}$. 
We shall proceed with the estimation of the cdf of the latter random variable. 

Observe first that Claim~\ref{clm:belonging} implies that for all $0 < i \leq T$ we have 
$V_i' \subset \bigcup_{j=0}^{i-1} \{ T_\eps^+ (p_\ell^{(j)}) \cup T_\eps^+ (p_r^{(j)}) \}$, 
assuming that $p_\ell^{(0)},p_r^{(0)}$ are both set to the point of $\D$ where $v$ is located.  
Let $\mathcal{N}_i$ be the set of vertices that belong to $V_i'$. For a vertex $u \in \V \setminus V_i'$,
the distribution on $\D$ is uniform (within the plane of curvature $-\alpha^2$) on the subset of $\D$ that excludes the union of the 
balls of radius $R$ around each vertex in $V_i'$. Recall that $\mu_{\alpha} (\cdot )$ is defined in \eqref{mu(U)Definition}. 
By Lemma~\ref{lem:relAngle} and the above observation, the area of the latter is at 
most $\sum_{j=0}^{i-1} \mu_{\alpha} (T_\eps^+ (p_\ell^{(j)}) \cup T_\eps^+ (p_r^{(j)}))$.  
But for each $j$, the angle that is spanned by $T_\eps^+ (p_\ell^{(j)}) \cup T_\eps^+ (p_r^{(j)})$ is proportional to  
$e^{R/(2\alpha) - R + \omega (N)}=o(1)$. Thus, if $i< R$, then we have
$\sum_{j=0}^{i-1} \mu_{\alpha} (T_\eps^+ (p_\ell^{(j)}) \cup T_\eps^+ (p_r^{(j)})) = o (\mu_{\alpha}(\D) )$. 

Using this, we conclude that the conditional probability that a vertex $u \in \V \setminus \mathcal{N}_i$ belongs to 
$T_\eps^+ (p^{(i)})$ and has type $t_u$ that satisfies 
$t\leq t_u < R/(2\alpha) +\omega(N)$ is at most 
\begin{equation*}
\begin{split}
&\int_{t}^{\frac{R}{2\alpha} +\omega(N)}\frac{4(1+\eps)}{2\pi}e^{\frac{t_i+t'-R}{2}}
\frac{\alpha \sinh (\alpha (R-t'))}{\cosh (\alpha R) (1-o(1))}dt' \\ 
&\leq \frac{2 \alpha (1+2\eps)}{\pi} e^{\frac{t_i -R}{2}} \int_{t}^{\frac{R}{2\alpha} +\omega(N)}
e^{t'/2} \frac{e^{\alpha (R-t')}}{2\cosh (\alpha R) (1-o(1))}dt' \\
&\leq  \frac{2 \alpha (1+3\eps)}{\pi} e^{\frac{t_i -R}{2}} 
\int_{t}^{\frac{R}{2\alpha} +\omega(N)}
e^{\left(\frac{1}{2} - \alpha \right) t'} dt' \\
&=\frac{2 \alpha \nu (1+3\eps)}{\pi} \frac{e^{t_i/2}}{N}
\int_{t}^{\frac{R}{2\alpha} +\omega(N)}
e^{\left(\frac{1}{2} - \alpha \right) t'} dt' < \frac{4 \alpha \nu (1+3\eps)}{\pi (2\alpha -1)} \frac{e^{t_i/2}}{N} 
e^{\left(\frac{1}{2} - \alpha \right) t},
\end{split}
\end{equation*}
for $N$ sufficiently large. 
Therefrom, the conditional probability that \emph{none} of the vertices in $\V \setminus \mathcal{N}_i$ satisfies this is at least 
\begin{equation} \label{eq:prb>t}
\begin{split}
& \left(1-  \frac{4 \alpha \nu (1+3\eps)}{\pi (2\alpha -1)} \frac{e^{t_i/2}}{N} 
e^{\left(\frac{1}{2} - \alpha \right) t} \right)^{|\V \setminus \mathcal{N}_i|} > 
\left( 1-  \frac{4 \alpha \nu (1+3\eps)}{\pi (2\alpha -1)} \frac{e^{t_i}/2}{N} 
e^{\left(\frac{1}{2} - \alpha \right) t} \right)^{N} \\
&> \exp \left( - D_{\alpha,\nu,\eps} e^{\frac{t_i}{2} - (\alpha -1/2)t} \right),
\end{split}
\end{equation}
for some $D_{\alpha,\nu,\eps} >0$ and any $N$ sufficiently large. 

Therefore, for $i<R$ the random variable $\max \{ t_\ell^{(i+1)}, t_r^{(i+1)}\}$ conditional on the history of the process up to step $i$
is stochastically dominated by a random
variable that follows the Gumbel distribution. The expectation of the latter is 
$$ \frac{t_i + 2\ln (2D_{\alpha,\nu,\eps})}{2\alpha -1} + \frac{2\gamma}{2\alpha -1}, $$
where $\gamma$ is Euler's constant. 
 Therefore, the following inequality holds:
 $$ \Ex{t_{i+1} | \mathcal{F}_i} \leq  \frac{t_i + 2\ln (2D_{\alpha,\nu,\eps})}{2\alpha -1} + \frac{2\gamma}{2\alpha -1}, $$
 where $\mathcal{F}_i$ denotes the sub-$\sigma$-algebra generated by the process up to step $i$. 
There exists a constant $U_{\alpha,\nu,\eps}>0$ such that when $t_i > U_{\alpha,\nu,\eps}$, we have 
\begin{equation} \label{eq:Supermart} 
\Ex{ t_{i+1}| \mathcal{F}_i } \leq  \frac{t_i + 2\ln (2D_{\alpha,\nu,\eps})}{2\alpha -1} + \frac{2\gamma}{2\alpha -1} < 
\frac{\alpha}{ 2\alpha -1} t_i =: \lambda_\alpha t_i< t_i.
\end{equation}

On the other hand, (\ref{eq:prb>t}) implies that if $t_i \leq U_{\alpha, \nu, \eps}$, then 
\begin{equation}\label{eq:lowInterval} \P ( t_{i+1}=0) \geq p>0, \end{equation}
for some positive constant $p$. 

With these tools, we can bound the stopping time $T$ of the process. 
Let $[T_1^{(s)}, T_2^{(s)} \wedge R]$ denote the $s$th interval of indices in which the process stays above $U_{\alpha, \nu, \eps}$.  
By (\ref{eq:Supermart}), for $T_1^{(s)} < i \leq T_2^{(s)} \wedge R$ the process $(t_i)$ is a supermartingale with decay rate at most $\lambda_{\alpha}$.  
\begin{claim} \label{clm:length_int}
For any $\eps' >0$
$$\Pr ( (T_2^{(s)} \wedge R) - T_1^{(s)}  \geq \log_{1/\lambda_{\alpha}}^{1+\eps'} R)= o(1).$$
\end{claim}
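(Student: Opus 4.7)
\noindent\emph{Proof proposal.} The plan is to turn \eqref{eq:Supermart} into a genuinely contracting supermartingale and then apply Markov's inequality to bound the probability that the process stays above $U_{\alpha,\nu,\eps}$ for $K:=\log_{1/\lambda_\alpha}^{1+\eps'}R$ consecutive steps. First I would record that throughout the modified breadth exploration only vertices of type at most $R/(2\alpha)+\omega(N)=O(R)$ are ever exposed, so in particular the starting value $t_{T_1^{(s)}}=O(R)$ deterministically. Note also that $\lambda_\alpha=\alpha/(2\alpha-1)<1$ precisely because $\alpha>1$, so the right-hand side of \eqref{eq:Supermart} really is contracting.

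Next I would introduce the truncated, shifted process $(M_k)_{k\ge 0}$ defined by $M_0=t_{T_1^{(s)}}$ and, for $k\ge 1$, $M_k=t_{T_1^{(s)}+k}$ if $t_{T_1^{(s)}+j}>U_{\alpha,\nu,\eps}$ for every $0\le j<k$, and $M_k=0$ otherwise (so $M$ is frozen at $0$ as soon as the underlying process first dips to or below $U_{\alpha,\nu,\eps}$). The key claim is that $(M_k)$ is a non-negative supermartingale with multiplicative contraction $\lambda_\alpha$. Indeed, on $\{M_k>0\}$ we have $t_{T_1^{(s)}+k}>U_{\alpha,\nu,\eps}$, so \eqref{eq:Supermart} gives $\Ex{t_{T_1^{(s)}+k+1}\mid\mathcal{F}_{T_1^{(s)}+k}}\le \lambda_\alpha t_{T_1^{(s)}+k}=\lambda_\alpha M_k$, and because the truncation to $0$ only ever decreases the value, $M_{k+1}\le t_{T_1^{(s)}+k+1}$ pointwise, hence $\Ex{M_{k+1}\mid\mathcal{F}_{T_1^{(s)}+k}}\le \lambda_\alpha M_k$; on $\{M_k=0\}$ we have $M_{k+1}=0$ and the inequality is trivial.

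Iterating yields $\Ex{M_K}\le \lambda_\alpha^K t_{T_1^{(s)}}=O\bigl(R\lambda_\alpha^K\bigr)$. For $K=\log_{1/\lambda_\alpha}^{1+\eps'}R$ this equals $O\bigl(R\cdot R^{-(\log_{1/\lambda_\alpha} R)^{\eps'}}\bigr)=o(1)$, since $\log_{1/\lambda_\alpha}R\to\infty$. Finally, the event $\{(T_2^{(s)}\wedge R)-T_1^{(s)}\ge K\}$ forces the process to stay above $U_{\alpha,\nu,\eps}$ at each of the first $K$ steps after $T_1^{(s)}$, which in particular implies $M_K=t_{T_1^{(s)}+K}>U_{\alpha,\nu,\eps}$; Markov's inequality then gives
\[
\Pr\!\bigl((T_2^{(s)}\wedge R)-T_1^{(s)}\ge K\bigr)\;\le\;\Pr(M_K>U_{\alpha,\nu,\eps})\;\le\;\frac{\Ex{M_K}}{U_{\alpha,\nu,\eps}}\;=\;o(1).
\]

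The only delicate step is the supermartingale verification: the bound \eqref{eq:Supermart} holds only on the event $\{t_i>U_{\alpha,\nu,\eps}\}$, and the absorbing-at-$0$ construction is exactly what allows one to iterate safely up to a deterministic horizon $K$ without having to control the evolution once the process first drops to or below the threshold. Aside from this, the estimate reduces to the routine calculation of $\lambda_\alpha^K$ for $K=\log_{1/\lambda_\alpha}^{1+\eps'}R$.
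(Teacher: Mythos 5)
Your proof is correct and takes essentially the same route as the paper: both use the supermartingale bound \eqref{eq:Supermart} iterated $K=\log_{1/\lambda_\alpha}^{1+\eps'}R$ times together with Markov's inequality against the threshold $U_{\alpha,\nu,\eps}$. The only difference is bookkeeping—you freeze the process at $0$ once it drops to the threshold, while the paper works with the stopped process $t_{i\wedge T^{(s)}}$—but these are two standard ways of phrasing the same optional-stopping argument.
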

\begin{proof}[Proof of Claim~\ref{clm:length_int}]
Let $S:=\log_{1/\lambda_{\alpha}}^{1+\eps'} R$ and let $T^{(s)} :=T_2^{(s)} \wedge R$. 
Note that $\Ex { t_{i \wedge T^{(s)}} \mid \mathcal{F}_{T_1^{(s)}} } 
\leq \lambda_\alpha^{i\wedge T^{(s)} - T_1^{(s)}} t_{T_1^{(s)}} \leq 
\lambda_\alpha^{i \wedge T^{(s)} - T_1^{(s)}} R$. 
Let $A$ be the event $\{ T^{(s)} > S +T_1^{(s)}\}$. 
If $\omega \in A$, then
$\lambda_\alpha^{(S + T_1^{(s)}(\omega))\wedge T^{(s)}(\omega ) - T_1^{(s)}} t_{T_1^{(s)}}(\omega )  < \lambda_{\alpha}^S R=o(1)$.
By the definition of the conditional expectation,
we deduce that $ \Ex{ t_{(S + T_1^{(s)}) \wedge T^{(s)}} \mathbf{1}_{A}} =o(1)$ and since 
$\Ex{ t_{(S + T_1^{(s)}) \wedge T^{(s)}} \mathbf{1}_{A}} > U_{\alpha,\nu , \eps} \Pr (A)$, we finally deduce that $\Pr (A) = o(1)$.  
\end{proof}
Now, the length of the (discrete) interval $(T_2^{(s)}, T_1^{(s+1)} \wedge T \wedge R)$ is stochastically bounded from above 
by a geometric random variable that has parameter at least $p$.  

We call the union of these intervals an \emph{epoch}, that is, we call an epoch the interval 
$[T_1^{(s)}, T_1^{(s+1)}\wedge T \wedge R)$, for some 
$s>0$. 
By the above claim, for any $\eps' >0$, with probability $1-o(1)$, we have 
$(T_2^{(s)}\wedge R) - T_1^{(s)} \leq \log_{1/\lambda_\alpha}^{1+\eps'} R$. Additionally, the stochastic upper bound on the interval $(T_2^{(s)},T_1^{(s+1)})$ implies that this is at most $\log_{1/\lambda_\alpha}^{\eps'} R$ with probability $1-o(1)$. Hence, 
with probability $1-o(1)$ an epoch lasts for at most $\log_{1/\lambda_\alpha}^{1+2\eps'} R$ steps. Finally, since every epoch has probability 
at least $p$ to be the final one, it follows that the process hits $0$ within  $\log_{1/\lambda_\alpha}^{1+3\eps'} R$ steps with 
probability $1-o(1)$. In other words, a.a.s. we have $T \leq \log_{1/\lambda_\alpha}^{1+3\eps'} R$.

\end{proof}

Using the previous lemmas we prove Theorem~\ref{thm:almostultrasmall}.
\begin{proof}[Proof of Theorem~\ref{thm:almostultrasmall}]
	Let $0<\eps'<\eps$.
	Let $V'$ be the set of vertices in $\Gnan$ that have an umbrella of width at most $\log^{1+\eps'}\log N$.
	By Lemma~\ref{lem:umbrellaLength} we have $|V'|=(1-o(1))N$ a.a.s.
	For any $u,v\in V'$, if they are in the same component, by Corollary~\ref{cor:envMeet} the umbrellas are not disjoint.
	Thus there is a $u$-$v$-path of length at most $|U_u|+|U_v|\leq2\log^{1+\eps'}\log N<\log^{1+\eps}\log N$ for $N$ large enough.
\end{proof}

\bibliographystyle{apalike}

\end{document}